\documentclass[oneside,english]{amsart}
\usepackage[T1]{fontenc}
\usepackage[utf8]{inputenc}
\usepackage{array}
\usepackage{multirow}
\usepackage{amstext}
\usepackage{amsthm}
\usepackage{amsmath}
\usepackage{amssymb}
\usepackage{hyperref}

\usepackage[table]{xcolor}

\usepackage[
backend=biber,
style=alphabetic,
sorting=ynt
]{biblatex}
\addbibresource{references.bib}

\makeatletter

\providecommand{\tabularnewline}{\\}

\numberwithin{equation}{section}
\numberwithin{figure}{section}
\theoremstyle{plain}
\newtheorem{thm}{\protect\theoremname}
  \theoremstyle{definition}
  
  \theoremstyle{remark}
  
  \theoremstyle{plain}
  \newtheorem{conjecture}[thm]{\protect\conjecturename}
  \newtheorem{lem}[thm]{\protect\lemmaname}
  \theoremstyle{definition}
  
  \theoremstyle{plain}
  \newtheorem{cor}[thm]{\protect\corollaryname}
  \providecommand{\corollaryname}{Corollary}

\makeatother

\usepackage{babel}
  \providecommand{\conjecturename}{Conjecture}
  \providecommand{\definitionname}{Definition}
  \providecommand{\examplename}{Example}
  \providecommand{\remarkname}{Remark}
\providecommand{\lemmaname}{Lemma}
\providecommand{\theoremname}{Theorem}

\begin{document}

\title{On Primorial Numbers}

\author{Jonatan Gomez \\ jgomezpe@unal.edu.co \\ Universidad Nacional de Colombia}
\maketitle
\begin{abstract}
    Prime numbers have attracted the attention of mathematicians and enthusiasts for millenniums due to their simple definition and remarkable properties. In this paper, we study primorial numbers (the product of the first prime numbers) to define primorial sets, primorial intervals, primorial tables, and primorial totative numbers. We establish relationships between prime numbers and primorial totative numbers and between admissible $k$-tuples of prime numbers and admissible $k$-tuples of primorial totative. Finally, we study the Goldbach conjecture and derive four Goldbach conjectures using primorial intervals, twin, cousin, and sexy prime numbers.
\end{abstract}

\section{Introduction}
\subsection{Prime Numbers}
A prime number is a natural number having exactly two factors, $1$ and itself \cite{narkiewicz2000}. The natural number $1$ is not considered a prime number since it has only one factor ($1$). From now on, $p_{n}$ will denote the $n$\textbf{\emph{th}} prime number, here $n\geq 1$. The first eleven prime numbers are $2,3,5,7,11,13,17,19,23,29,31$. Although different classes of prime numbers have been defined \cite{wikipediaprimelist}, in this paper, we especially concentrate on some prime number classes derived from the notions of admissible $k$-tuple and $k$-constellation \cite{10.1090/S0025-5718-99-01117-5,wikipediaprimektuple}. 
\subsection{Admissible tuples}
\label{sec:admissible}
A $k$-tuple $a=(a_1,a_2,\ldots,a_{k})$ of natural numbers is admissible if and only if $a_1=0$, $a_{i-1}<a_i$ for all $i=2,3,\ldots,k$, and there is a prime number $p>3$ such that $(p,p+a_2,\ldots,p+a_k)$ is a $k$-tuple of prime numbers, i.e. $p+a_i$ is a prime number for all $i=1,2,\ldots,k$. We say that a prime number $p>3$ satisfies an admissible $k$-tuple $a$ (we denote this situation as $[p|a]$) if and only if $(p,p+a_2,\ldots,p+a_k)$ is a $k$-tuple of prime numbers. The diameter of an admissible $k$-tuple $a$ is $dia(a)=a_k$ while its gap is the maximum difference between two of its consecutive elements, i.e. $gap(a)=max\left\{a_i-a_{i-1} \, \middle| \, i=2,3,\ldots,k\right\}$. For example, 
\begin{enumerate}
    \item $a=(0,2)$ is an admissible $2$-tuple. If $p=5$, then $5+0=5$ and $5+2=7$, so $(5,7)$ is a $2$-tuple of prime numbers. Clearly, $dia(a)=a_2=2$ and $gap(a)=max\{2-0\}=2$.
    \item $a=(0,2,6)$ is an admissible $3$-tuple. If $p=5$ then $5+0=5$, $5+2=7$, and $5+6=11$, so $(5,7,11)$ is a $3$-tuple of prime numbers. Clearly, $dia(a)=a_2=6$ and $gap(a)=max\{2-0,6-2\}=4$.
    \item $(0,1)$ is not admissible: since $p>3$ is a prime number then $p=2x+1$ (odd number, i.e., not a multiple of $2$). Then $p+1=2x+1+1=2x+2=2(x+1)$ is an even number, therefore it is not a prime number (contradiction).
    \item $(0,2,4)$ is not admissible $3$-tuple: since $p>3$ then $p=3x+y$ with $y=1,2$ ($p$ is prime so it is not a multiple of $3$). If $y=1$ then $p+2=3x+1+2=3x+3)3(x+1)$ so $p+2$ is a multiple of $3$ then it is not a prime number (contradiction). If $y=2$ then $p+4=3x+2+4=3x+6=3(x+2)$. So $p+4$ is a multiple of $3$ then it is not a prime number (contradiction).
\end{enumerate} 
\subsection{Constellations} 
\label{sec:constellation}
An admissible $k$-tuple $a=(a_1,a_2,\ldots,a_{k})$ is $k$-constellation if and only if for any admissible $k$-tuple $b=(b_1,b_2,\ldots,b_{k})$ we have that $dia(a)\leq dia(b)$. For example, $(0,2)$, $(0,4)$, and $(0,6)$ are admissible $2$-tuples but only $(0,2)$ is a $2$-constellation ($2\leq 4\leq 6$).

\subsection{Prime Number Classes}
\label{sec:primeclasses}
The following is a short list of prime number classes defined in terms of admissible $k$-tuples and $k$-constellations:

\begin{enumerate}
    \item \textbf{Twin primes.} Two prime numbers $p$ and $q$ are twin primes if and only if $q=p+2$. For example, primes $5$ and $7$ are twin primes since they define the prime $2$-tuple $(5,5+2)$ which satisfies the $2$-prime constellation $(0,2)$.
    \item \textbf{Cousin primes.} Two prime numbers $p$ and $q$ are called cousin primes if and only if $q=p+4$. For example, primes $7$ and $11$ are cousin primes since they define the prime $2$-tuple $(7,7+4)$ which satisfies the admissible $2$-tuple $(0,4)$.
    \item \textbf{Sexy primes.} Two prime numbers $p$ and $q$ are called sexy primes if and only if $q=p+6$. For example, primes $5$ and $11$ are sexy primes since they define the prime $2$-tuple $(5,5+6)$ which satisfies the admissible $2$-tuple $(0,6)$.
    \item \textbf{Sexy primes triplet.} Three prime numbers $p$, $q$, and $r$ are called a sexy prime triplet if and only if $q=p+6$ and $r=p+12$. For example, primes $5$, $11$, and $17$ are a sexy prime triplet since they define the prime $3$-tuple $(5,5+6,5+12)$ which satisfies the admissible $3$-tuple $(0,6,12)$.
    \item \textbf{Sexy primes quadruplet.} Four prime numbers $p,q,r,$ and $s$ are called a sexy prime quadruple if and only if $q=p+6$, $r=p+12$, and $s=p+18$. For example, primes $5$, $11$, $17$, and $23$ are a sexy prime quadruplet since they define the prime $4$-tuple $(5,5+6,5+12,5+18)$ which satisfies the admissible $4$-tuple $(0,6,12,18)$.
\end{enumerate}

\subsection{\label{Primorial}Primorial}

Many interesting operations can be defined over prime numbers, for example, we can define the \textbf{primorial} of the $n$\emph{th} prime number as the product of the first $n\in\mathbb{N}^{+}$ prime numbers \cite{Dubner1987}, i.e., $\#(n)=p_n\#=\prod_{i=1}^{n}p_k$. This definition is similar to the definition of the factorial function, so it is possible to define the primorial as a recursive function $\#(0)=1$ and $\#(n)=p_n\#(n-1)$ for $n\geq 1$. The first six primorials are $1, 2,6,30,210,2310$.

\subsection{Coprime or Relative-prime Numbers}

Two natural numbers $a, b \in \mathbb{N}$ are coprime or relative-prime numbers if and only if their greatest common divisor is $1$ ($gcd(a,b)=1$). According to this definition, \textit{i)} $1$ is relative-prime with any other positive natural number, \textit{ii)} $0$ is not relative-prime with any natural number, and \textit{iii)} any prime number is relative-prime with any other prime number. Notice that we can define prime numbers in terms of coprime numbers: A natural number $p>1$ is a prime number if and only if $p$ is relative-prime to $q$ for all natural numbers $1\leq q<p$.

\subsection{Natural numbers less than $n$ ($\mathbb{Z}_n$)}
Let $n\in\mathbb{N}$, the set of natural numbers less than $n$, is $\mathbb{Z}_n=\{0,1,\ldots,n-1\}$. For example, $\mathbb{Z}_2=\{0,1\}$ and $\mathbb{Z}_{\#(2)}=\mathbb{Z}_{2*3}=\mathbb{Z}_{6}=\{0,1,3,4,5\}$. Amount the properties hold by $\mathbb{Z}_n$, we are especially interested in the structure of the subset of natural numbers that are relative-prime to $\#(n)$ (sometimes called as totative numbers of $\#(n)$). For instance, consider $\mathbb{Z}_{\#(2)}=\mathbb{Z}_6$, the subset of $\mathbb{Z}_6$ defined by the natural numbers that are relative-prime to $6$ is $\{1,5\}$. It is clear that any prime number $q$ such $p_n<q<\#(n)$ will be a relative-prime number to $\#(n)$. 

\subsection{Euler's totient function ($\varphi(n)$)}
Euler's totient function \cite{Euler1763} counts the natural numbers, in $\mathbb{Z}_n$, which are relative-prime to $n$. Take for example $\mathbb{Z}_{\#(3)}=\mathbb{Z}_{30}$, the subset of natural numbers relative-prime to $30$ is $\{1,7,11,13,17,19,23,29\}$, therefore $\varphi(30)=8$. Euler's totient function is a multiplicative function, i.e., $\varphi(ab)=\varphi(a)\varphi(b)$ for two coprime numbers $a$ and $b$. Moreover, for a prime number $p$, we have that $\mathbb{Z}_p - \{0\}$ is the set of totative numbers of $p$, therefore $\varphi(p)= p-1$. Using these two properties of Euler's totient function, we can easily compute it on primorial numbers: $\varphi(\#(n))=\prod_{i=1}^{n}(p_k-1)=\prod_{i=1}^{n}\varphi(p_k)$.

\section{Primorial Sets, Primorial Intervals, and Primorial Totatives}

\subsection{Primorial sets}
We define a $n$-primorial set by 'replacing' numbers $0$ and $1$ in $\mathbb{Z}_{\#(n)}$, with numbers $\#(n)$ and $\#(n)+1$, respectively. We do this since the number $1$ is always coprime to another positive natural number. Let $n\in\mathbb{N}$, the $n$-primorial set is defined as  $\mathbb{Z}^{\#}_n=\{2,3,\ldots,\#(n),\#(n)+1\}$. The following is the list of the first five $n$-primorial sets:

\begin{enumerate}
    \item $\mathbb{Z}^{\#}_1=\{2,3\}$
    \item $\mathbb{Z}^{\#}_2=\{2,3,4,5,6,7\}$
    \item $\mathbb{Z}^{\#}_3=\{2,3,\ldots,30,31\}$
    \item $\mathbb{Z}^{\#}_4=\{2,3,\ldots,210,211\}$
    \item $\mathbb{Z}^{\#}_5=\{2,3,\ldots,2310,2311\}$    
\end{enumerate}

\subsection{Primorial intervals}
Let $n\in\mathbb{N}$, the $n$-primorial interval is defined as $\mathcal{I}_n=\left\{m \, \middle| \, \#(n-1)+1 \leq x \leq \#(n)+1\right \}$. The following is the list of the first five $n$-primorial intervals:

\begin{enumerate}
    \item $\mathcal{I}_1=\{2,3\}$
    \item $\mathcal{I}_2=\{3,4,5,6,7\}$
    \item $\mathcal{I}_3=\{7,8,\ldots,30,31\}$
    \item $\mathcal{I}_4=\{31,32,\ldots,210,211\}$
    \item $\mathcal{I}_5=\{211,212,\ldots,2310,2311\}$    
\end{enumerate}

Notice that primorial intervals are not disjoint sets: the last element of $\mathcal{I}_n$ is the first element of $\mathcal{I}_{n+1}$.

\subsection{\label{sec:tot}Primorial $n$-totatives}

We can extend the notion of totatives of $\#(n)$ to set $\mathbb{Z}^{\#}_n$ by considering the subset of natural numbers in $\mathbb{Z}^{\#}_n$ being relative-prime to $\#(n)$. The following is the list of the first three $n$-totative sets: 

\begin{enumerate}
    \item The $1$-totative set is $tot(1)=\{3\}$
    \item The $2$-totative set is $tot(2)=\{5,7\}$
    \item The $3$-totative set is $tot(3)=\{7,11,13,17,19,23,29,31\}$
\end{enumerate}

Notice that any prime number $p \in \mathbb{Z}^{\#}_n$ must be either $p_i$ for some $i=1,2,\ldots,n$ or a $n$-totative number, i.e., we can express the Prime Number Theorem in terms of primorials and $n$-totative numbers. We will explore this relationship in a future paper. In this section, we just extend definitions in sections \ref{sec:admissible}-\ref{sec:primeclasses} to $n$-totative numbers.

\subsection{$n$-admissible} 
A $k$-tuple $a=(a_1,a_2,\ldots,a_{k})$ of natural numbers is $n$-admissible if and only if $a_1=0$, $a_{i-1}<a_i$ for all $i=2,3,\ldots,k$, and for all $m\geq n$ there is a $m$-totative number $t_m$ such that $(t_m,t_m+a_2,\ldots,t_m+a_k)$ is a $m$-totative $k$-tuple. Notice that a $n$-admissible $k$-tuple is also a $m$-admissible $k$-tuple for all $m>n$. A $n$-totative number $t$ \textbf{satisfies} an $n$-admissible $k$-tuple $a$ if and only if $a[t]=(t,t+a_2,\ldots,t+a_k)$ is a $k$-tuple of $n$-totative numbers. The \textbf{gap} of a $k$ tuple $a$ is the maximum difference between two consecutive elements of the $k$-tuple, i.e., $gap(a)=max\left\{a_i-a_{i-1} \, \middle| \, i=2,3,\ldots,k\right\}$.

\subsection{$n$-strong}
A $n$-admissible $k$-tuple $a=(a_1,a_2,\ldots,a_{k})$ is $n$-strong if and only if for all $m\geq n$ we have that $mod(a_i,p_m)\neq mod(a_j,p_m)$ with $i=1,\ldots,k-1$ and $j=i+1,\ldots,k$. Notice that being $n$-strong implies that $k\leq p_n$.

\subsection{$n$-constellation}
A $n$-admissible $k$-tuple $a=(a_1,a_2,\ldots,a_{k})$ is $n$-totative $k$-constellation if and only if for $a_k\leq b_k$ for any $n$-admissible $k$-tuple $b=(b_1,b_2,\ldots,b_{k})$.

\subsection{Classes of primorial $n$-totative numbers}

The following is a short list of $n$-totative numbers classes:

\begin{enumerate}
    \item \textbf{Isolated.} Let $a=(0,a_2,\ldots,a_{k})$ a $n$-admissible $k$-tuple, and $t$ a $n$-totative number such that $a[t]$ is a $n$-totative $k$-tuple. $a[t]$ is isolated if and only if both $a[t-a_2]$ and $a=[t_2]$ are not $n$-totative $k$-tuples. 
    \item \textbf{$n$-Twin.} Two $n$-totative numbers $p$ and $q$ define a $n$-twin couple if and only if $q=p+2$. Notice that a $n$-twin couple satisfies the $n$-strong $n$-totative $2$-constellation $(0,2)$. Moreover, it is an isolated $2$-tuple when $n>2$ since the $3$-tuple $(0,2,4)$ is not a $n$-admissible $3$-tuple: $t$, $t+2$, and $t+4$ all must be coprime to $3$, therefore $t=3s+1$ or $t=3s+2$ for some natural number $s$. If $t=3s+1$ then $t+2=3s+1+2=3(s+1)$, so $t+2$ is not coprime to $3$. Now if $t=3s+2$ then $t+4=3s+2+4=3(s+2)$, so $t+4$ is not coprime to $3$. 
    \item \textbf{$n$-Cousin.} Two $n$-totative numbers $p$ and $q$ are a $n$-cousin couple if and only if $q=p+4$. Notice that a $n$-cousin couple satisfies the $n$-strong $n$-admissible $2$-tuple $(0,4)$. Moreover, it is an isolated $2$-tuple when $n>3$ ($(0,4,8)$ is not a $n$-admissible $3$-tuple, we left the proof to the reader).
    \item \textbf{$n$-Sexy.} Two $n$-totative numbers $p$ and $q$ are a $n$-sexy couple if and only if $q=p+6$. Notice that a $n$-sexy couple $(p,p+6)$ satisfies the $n$-admissible $2$-tuple $(0,6)$. Moreover, it is isolated if and only if $p-6$ and $p+12$ are not $n$-totative.
    \item \textbf{$n$-Sexy triplet.} Three $n$-totative numbers $p$, $q$, and $r$ are a $n$-sexy triplet iff $q=p+6$ and $r=p+12$. Notice that a $n$-sexy triplet $(p,p+6,p+12)$  satisfies the strong $n$-admissible $3$-tuple $(0,6,12)$. Moreover, it is isolated if and only if $p-6$ and $p+18$ are not $n$-totative.
    \item \textbf{$n$-Sexy primes quadruplet.} Four $n$-totative numbers $p$, $q$, $r$, and $s$ are a $n$-sexy quadruplet iff $q=p+6$, $r=p+12$, and $s=p+18$. Notice that any $n$-sexy quadruplet  satisfies the $n$-strong $n$-admissible $4$-tuple $(0,6,12,18)$. Moreover, it is an isolated $4$-tuple ($(0,6,12,18,24)$ is a non $n$-admissible $5$-tuple, we left the proof to the reader).
\end{enumerate}

\section{Primorial Tables}
Primorial sets can be arranged as tables, see Tables \ref{tab:1-primorial}, \ref{tab:2-primorial}, and \ref{tab:3-primorial}. Notice that \textit{i)} Multiples of prime numbers $p<p_n$ are located on columns, of the $n$-primorial table, columns that are completely marked as red, i.e., a column with elements that are not relative-primes to $\#(n-1)$; \textit{ii)} Prime $p_n$ will mark just one element of each column (multiple of $p_n$); and \textit{iii)} There are columns maintaining almost exclusively the $n$-totative numbers, except made for the one in each column that is a multiple of $p_n$. We call these columns $n$-totative columns. Table \ref{tab:4-totient-columns} shows just the $4$-totative columns. 

\begin{table}[htbp]   
    \centering
    \begin{tabular}{|c|}
        \hline
        \cellcolor{blue!25}2 \\
        \hline
        3 \\ 
        \hline
    \end{tabular}
    \caption{\label{tab:1-primorial}$1$-primorial table ($\#(1)=2$). Multiples of $p_1=2$ are shown in blue. $1$-totatives are shown in white.}
\end{table}    
    
\begin{table}[htbp]
    \centering
    \begin{tabular}{|c|c|}
        \hline
        \cellcolor{red!25}2 & \cellcolor{blue!25}3  \\
        \hline
        \cellcolor{red!25}4 & 5 \\
        \hline
        \cellcolor{yellow}6 & 7 \\
        \hline    
    \end{tabular}
    \caption{\label{tab:2-primorial}$2$-primorial table ($\#(2)=2*3=6$). Multiples of $p_2=3$ are shown in blue, multiples of $p_1=2$ are shown in red, and multiples of $p_1=2$ and $p_2=3$ are shown in yellow. $2$-totatives are shown in white.}
\end{table}    

\begin{table}[htbp]
    \centering
    \begin{tabular}{|c|c|c|c|c|c|}
        \hline
        \cellcolor{red!25}2 & \cellcolor{red!25}3 & \cellcolor{red!25}4 & \cellcolor{blue!25}5 & \cellcolor{red!25}6 & 7 \\
        \hline
        \cellcolor{red!25}8 & \cellcolor{red!25}9 & \cellcolor{yellow}10 & 11 & \cellcolor{red!25}12 & 13 \\
        \hline
        \cellcolor{red!25}14 & \cellcolor{yellow}15 & \cellcolor{red!25}16 & 17 & \cellcolor{red!25}18 & 19 \\
        \hline
        \cellcolor{yellow}20 & \cellcolor{red!25}21 & \cellcolor{red!25}22 & 23 & \cellcolor{red!25}24 & \cellcolor{blue!25}25 \\
        \hline
        \cellcolor{red!25}26 & \cellcolor{red!25}27 & \cellcolor{red!25}28 & 29 & \cellcolor{yellow}30 & 31 \\    
        \hline
    \end{tabular}
    \caption{\label{tab:3-primorial}$3$-primorial table ($\#(3)=2*3*5=30$). Multiples of $p_3=5$ are shown in blue, multiples of $p_1=2$ or $p_2=3$ are shown in red, and multiples of $p_1=2$ or $p_2=3$, and $p_3=5$ are shown in yellow. $3$-totatives are shown in white.}
\end{table}    

\begin{table}[htbp]
    \centering
    \begin{tabular}{|c|c|c|c|c|c|c|c|}
        \hline
        \cellcolor{blue!25}7 & 11 & 13 & 17 & 19 & 23 & 29 & 31 \\
        \hline
        37 & 41 & 43 & 47 & \cellcolor{blue!25}49 & 53 & 59 & 61 \\
        \hline
        67 & 71 & 73 & \cellcolor{blue!25}77 & 79 & 83 & 89 & \cellcolor{blue!25}91 \\
        \hline
        97 & 101 & 103 & 107 & 109 & 113 & \cellcolor{blue!25}119 & 121 \\
        \hline
        127 & 131 & \cellcolor{blue!25}133 & 137 & 139 & 143 & 149 & 151 \\
        \hline
        157 & \cellcolor{blue!25}161 & 163 & 167 & 169 & 173 & 179 & 181 \\
        \hline
        187 & 191 & 193 & 197 & 199 & \cellcolor{blue!25}203 & 209 & 211 \\
        \hline
    \end{tabular}
    \caption{\label{tab:4-totient-columns}$4$-totative columns ($\#(4)=2*3*5*7=210$). Multiples of $p_4=7$ are shown in blue.}
\end{table}    

We define rows and columns in a $n$-primorial table as follow: the $k$th row of a $n$-primorial table is $row(n,k)=\{m\mid m=k*\#(n-1)+j$ for $2\leq j\leq\#(n-1)+1\}$ with $0\leq k < p_n$. The $n$-totative column defined by the $(n-1)$-totative number $t$ is $col(n,t)=\{m\mid m=t+k*\#(n-1)$ for $0\leq k < p_n\}$. For example, the $4$-totative column $col(4,13)=\{m\mid m=13+k*30$ for $0\leq k < 7\}$, i.e., $col(4,13)=\{13,43,73,103,133,163,193\}$, is defined by the $3$-totative number $13$. 

\begin{lem}
\label{lem:mark-one}
   There is one and just one element of $col(n,t)$ that is a multiple of $p_n$.
\end{lem}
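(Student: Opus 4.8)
The plan is to reduce everything modulo $p_n$ and exploit that $\#(n-1)$ is coprime to $p_n$. By definition $col(n,t)=\{t+k\cdot\#(n-1)\mid 0\le k<p_n\}$ is a set of exactly $p_n$ numbers, and an element $t+k\cdot\#(n-1)$ is a multiple of $p_n$ precisely when $t+k\cdot\#(n-1)\equiv 0\pmod{p_n}$. So the claim is equivalent to the statement that, as $k$ ranges over $\mathbb{Z}_{p_n}=\{0,1,\ldots,p_n-1\}$, exactly one value of $k$ makes $t+k\cdot\#(n-1)$ divisible by $p_n$.

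First I would note that $\#(n-1)=\prod_{i=1}^{n-1}p_i$ is a product of primes all different from $p_n$, hence $gcd(\#(n-1),p_n)=1$ and $\#(n-1)$ has a multiplicative inverse modulo $p_n$. Consequently the map $k\mapsto t+k\cdot\#(n-1)\pmod{p_n}$ is injective on $\mathbb{Z}_{p_n}$: if $k,k'\in\mathbb{Z}_{p_n}$ give the same residue, then $(k-k')\cdot\#(n-1)\equiv 0\pmod{p_n}$, which forces $k\equiv k'\pmod{p_n}$ and hence $k=k'$. An injective self-map of a finite set is a bijection, so the map is also onto, and therefore every residue class modulo $p_n$ — in particular the class of $0$ — is attained by exactly one $k$ in the range $0\le k<p_n$.

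The unique such $k$ is explicitly $k\equiv -t\cdot\#(n-1)^{-1}\pmod{p_n}$, and it picks out a single element $t+k\cdot\#(n-1)$ of $col(n,t)$ that is a multiple of $p_n$, with no other element of the column having this property. This completes the argument. There is essentially no obstacle here; the only points deserving a line of care are that the column is indexed by a complete residue system modulo $p_n$ (so that the "bijection of $\mathbb{Z}_{p_n}$" step genuinely applies) and the elementary fact $p_n\nmid\#(n-1)$. Neither of these uses the hypothesis that $t$ is an $(n-1)$-totative number, although that hypothesis is exactly what guarantees the remaining $p_n-1$ entries of the column avoid the primes below $p_n$, as asserted in the preceding discussion of $n$-totative columns.
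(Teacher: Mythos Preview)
Your proof is correct and follows essentially the same approach as the paper: both arguments show that the residues $t+k\cdot\#(n-1)\pmod{p_n}$ are pairwise distinct as $k$ ranges over $\{0,\ldots,p_n-1\}$ (using that $p_n\nmid\#(n-1)$), hence form a complete residue system, so exactly one of them is zero. Your phrasing in terms of the invertibility of $\#(n-1)$ modulo $p_n$ and the explicit formula for $k$ is a mild polish on the paper's contradiction argument, but the substance is identical.
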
 
\begin{proof}
We need to prove that each element of $col(n,t)$ produces a different residue module $p_n$. Suppose that there are two elements of $col(n,t)$ producing the same residue $c$ module $p_n$, i.e., two elements $t_1 = t+k_1*\#(n-1)=a*p_n+c$ and $t_2=t+k_2*\#(n-1)=b*p_n+c$ such that $0\leq k_1 < k_2 < p_n$ and $0 \leq c < p_n$. If we subtract the first number from the second we have that $(k_2-k_1)*\#(n-1)=(b-a)*p_n$ then $\#(n-1)$ must be a multiple of $p_n$ since $0<k_2-k_1<p_n$, but it is impossible by definition of $\#(n-1)$. Now, we have $p_n$ different elements in $col(n,t)$ so we need $p_n$ different residues module $p_n$, therefore, there is one residue that must be $0$, i.e., there is only one element of $col(n,t)$ that is a multiple of $p_n$.
\end{proof}
\begin{cor}
\label{cor:tot-col}
If $n>1$ and $t$ is $n$-totative number then there is a $(n-1)$-totative number $t'$ such that $t$ belongs to the $n$-totative column defined by $t'$, i.e., $t=t'+k*\#(n-1)$ for some $0\leq k < p_n$.
\end{cor}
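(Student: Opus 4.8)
The plan is to construct $t'$ explicitly by division by $\#(n-1)$ and then check it has the required properties. First I would note that the underlying set of $\mathbb{Z}^{\#}_{n-1}$, namely $\{2,3,\ldots,\#(n-1),\#(n-1)+1\}$, is a complete residue system modulo $\#(n-1)$: it arises from the standard system $\{0,1,\ldots,\#(n-1)-1\}$ by replacing $0$ with $\#(n-1)$ and $1$ with $\#(n-1)+1$, exactly the substitution used to define primorial sets. Consequently there is a unique $t'$ in this range with $t\equiv t'\pmod{\#(n-1)}$, and writing $t=t'+k\#(n-1)$ determines an integer $k$.

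Next I would verify that $t'$ is genuinely an $(n-1)$-totative, not merely an element of the range. Since $t$ is an $n$-totative, $gcd(t,\#(n))=1$, and because $\#(n)=p_n\#(n-1)$ this forces $gcd(t,\#(n-1))=1$. As $t$ and $t'$ differ by a multiple of $\#(n-1)$, we get $gcd(t',\#(n-1))=gcd(t,\#(n-1))=1$; in particular $t'\neq\#(n-1)$, so $t'$ really is one of the totative numbers of $\#(n-1)$ sitting in $\mathbb{Z}^{\#}_{n-1}$. Then I would bound $k$ from the sizes of the sets: from $2\le t\le\#(n)+1$ and $2\le t'\le\#(n-1)+1$ we obtain $1-\#(n-1)\le t-t'\le\#(n)-1=p_n\#(n-1)-1$, and dividing by $\#(n-1)$ while using that $k$ is an integer yields $0\le k<p_n$. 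Hence $t=t'+k\#(n-1)$ with $0\le k<p_n$, which is precisely $t\in col(n,t')$.

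I do not anticipate a real obstacle; the only delicate points are the two ``shifted'' endpoints of $\mathbb{Z}^{\#}_{n-1}$ — one must use coprimality to exclude $t'=\#(n-1)$, and check that the inequalities on $t-t'$ are tight enough to rule out $k=-1$ and $k=p_n$ — but both are immediate from $\#(n-1)\ge 2$ and $\#(n)=p_n\#(n-1)$. As a sanity check, Lemma \ref{lem:mark-one} then tells us that $t$ avoids the unique multiple of $p_n$ lying in $col(n,t')$, consistent with $t$ being coprime to $p_n$.
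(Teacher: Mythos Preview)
Your argument is correct and follows essentially the same route as the paper: write $t=t'+k\#(n-1)$ with $t'$ in the $(n-1)$-primorial set and then show $\gcd(t',\#(n-1))=1$ from $\gcd(t,\#(n))=1$. The only differences are cosmetic: the paper invokes the row description of the $n$-primorial table to obtain the decomposition and $0\le k<p_n$ in one stroke, whereas you derive the bound on $k$ explicitly; and the paper proves coprimality of $t'$ by contradiction via a prime divisor, while you use the congruence $t\equiv t'\pmod{\#(n-1)}$ directly.
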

\begin{proof}
For any $t$ in the $n$-primorial set, there is a $t'$ in the $(n-1)$ primorial set and $0\leq k < p_n$ such that $t=t'+k*\#(n-1)$ (definition of $n$-table rows). Suppose that $t'$ is not a $(n-1)$-totative number, then $gcd(t',\#(n-1)) > 1$. Therefore, there is a prime number $p<p_n$ such that $t'=p*s$ for some $s\geq 1$. Using definition of $t$ we have that $t=p*s+p*\frac{\#(n-1)}{p}=p*\left(s+\frac{\#(n-1)}{p}\right)$. Since $p$ also divides $\#(n)$ then  $gcd(t,\#(n))\geq p$ but this is impossible since $t$ is $n$-totative, i.e., $gcd(\#(n), t)=1$. Therefore $t'$ is a $\#(n-1)$-totative number.
\end{proof}

\section{Counting Primorial $n$-Totative Numbers}
By using primorial tables, we can count $n$-totative numbers and their different classes. Numeric results are obtained with the C++ program (\textit{onprimorials.cpp}) freely available at the prime numbers github repository of professor Jonatan Gómez \cite{GomezPrimesGit}.

\begin{thm}
\label{thm:tot-phi}
The number of $n$-totatives can be defined recursively as $tot(1)=1$ and $tot(n)=(p_n-1)*tot(n-1)$ for $n>1$.
\end{thm}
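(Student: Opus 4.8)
The plan is to prove the recursion by partitioning the $n$-primorial set $\mathbb{Z}^{\#}_n$ into its columns and counting how many $n$-totatives each column contributes. I would first dispose of the base case: since $\mathbb{Z}^{\#}_1=\{2,3\}$ and $\#(1)=2$, the only element coprime to $2$ is $3$, so $tot(1)=1$. For the inductive step, fix $n>1$. By the definition of the rows $row(n,k)$ (for $0\le k<p_n$), which partition $\mathbb{Z}^{\#}_n$, every element of $\mathbb{Z}^{\#}_n$ can be written uniquely as $t'+k\#(n-1)$ with $t'\in\mathbb{Z}^{\#}_{n-1}$ and $0\le k<p_n$: injectivity holds because $|t'_1-t'_2|<\#(n-1)$ for any two elements of $\mathbb{Z}^{\#}_{n-1}$, and surjectivity is then a cardinality count since $|\mathbb{Z}^{\#}_{n-1}|\cdot p_n=\#(n-1)\cdot p_n=\#(n)=|\mathbb{Z}^{\#}_n|$. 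Equivalently, the $\#(n-1)$ columns $col(n,t')$ with $t'\in\mathbb{Z}^{\#}_{n-1}$ partition $\mathbb{Z}^{\#}_n$ into blocks of size $p_n$.

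Next I would show that such a column contributes exactly $p_n-1$ $n$-totatives when $t'$ is an $(n-1)$-totative, and none otherwise. The second case is precisely the contrapositive of Corollary \ref{cor:tot-col}: every $n$-totative lies in a column whose defining index is an $(n-1)$-totative. For the first case, assume $t'$ is $(n-1)$-totative and let $m=t'+k\#(n-1)\in col(n,t')$. Then $m\equiv t'\pmod{\#(n-1)}$, so $gcd(m,\#(n-1))=gcd(t',\#(n-1))=1$; since $\#(n)=p_n\#(n-1)$, it follows that $m$ is $n$-totative if and only if, in addition, $p_n\nmid m$. By Lemma \ref{lem:mark-one} exactly one of the $p_n$ elements of $col(n,t')$ is a multiple of $p_n$, hence exactly $p_n-1$ of them are $n$-totative.

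Finally I would sum over the partition: there are $tot(n-1)$ columns indexed by $(n-1)$-totatives, each contributing $p_n-1$ $n$-totatives, and every remaining column contributes none, so $tot(n)=(p_n-1)\cdot tot(n-1)$. I expect the main obstacle to be the bookkeeping in the first paragraph, namely confirming that $(t',k)\mapsto t'+k\#(n-1)$ really is a bijection onto $\mathbb{Z}^{\#}_n$ so that the columns are disjoint and exhaustive and no $n$-totative is double counted, together with checking that replacing $0,1$ by $\#(n),\#(n)+1$ in the definition of $\mathbb{Z}^{\#}_n$ leaves the coprimality arguments intact; it does, because $\#(n)\equiv 0$ and $\#(n)+1\equiv 1\pmod{\#(n-1)}$, so these two elements occupy the columns indexed by $\#(n-1)$ and $\#(n-1)+1$ exactly as $0$ and $1$ would.
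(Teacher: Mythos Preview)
Your argument is correct and follows the same approach as the paper's: use Corollary~\ref{cor:tot-col} to confine all $n$-totatives to the columns indexed by $(n-1)$-totatives, then apply Lemma~\ref{lem:mark-one} to see that each such column contributes exactly $p_n-1$ of them. The paper's proof is considerably terser and leaves the partition bookkeeping and the handling of the shifted elements $\#(n),\#(n)+1$ implicit, but your more careful version fills in exactly the details the paper elides without changing the strategy.
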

\begin{proof}
Clearly $tot(1)=1$, see Table \ref{tab:1-primorial}. Since there is only one multiple of $p_n$ in each $n$-totient column such an element will be marked (it is non $n$-totative), then there are $(p_n-1)*tot(n-1)$ elements that are $n$-totative.
\end{proof}

Result in Theorem \ref{thm:tot-phi} is expected since $\#(n)$ and $(\#(n)+1)$ are coprime numbers, therefore the number of $n$-totatives is equal to the totatives of $\#(n)$, i.e, the number of $n$-totatives is $\varphi(\#(n))=\prod_{i=1}^{n}\varphi(p_k)=\prod_{i=1}^{n}(p_k-1)=tot(n)$. However, we will consider the idea of 'marking' just one element of the $n$-totative columns for counting $n$-totative $k$-tuples, and $k$-constellations. Notice that any prime number in the $n$-primorial set must be a $n$-totative number or $p_i$ for $i=1,2,\ldots,n$. Table \ref{tab:tot-pi} shows this relation between the count of $n$-totative numbers ($tot(n)$) and the total number of prime numbers in the $n$-primorial set, i.e., up to $\#(n)+1$ ($\pi(\#(n)+1)$).

\begin{table}[htbp]
\centering
\begin{tabular}{|r|r|r|r|r|}
\hline
$n$ & $\#(n)$ & $tot(n)$ & $\pi(\#(n)+1)$ & $\frac{tot(n)}{\pi(\#(n)+1)}$\\
\hline
3 & 30 & 8 & 11 & 0.727273\\
\hline
4 & 210 & 48 & 47 & 1.02128\\
\hline
5 & 2310 & 480 & 344 & 1.39535\\
\hline
6 & 30030 & 5760 & 3248 & 1.7734\\
\hline
7 & 510510 & 92160 & 42331 & 2.17713\\
\hline
8 & 9699690 & 1658880 & 646029 & 2.56781\\
\hline
9 & 223092870 & 36495360 & 12283531 & 2.97108\\
\hline
10 & 6469693230 & 1021870080 & 300369796 & 3.40204\\
\hline
\end{tabular}
\caption{\label{tab:tot-pi} Relation between the count of $n$-totative numbers ($tot(n)$) and the total number of prime numbers in the $n$-primorial set, i.e., up to $\#(n)+1$ ($\pi(\#(n)+1)$).}
\end{table}

\subsection{Admissible Tuples of Totatives}
For determining the number of different $n$-totative number classes, we need some technical Theorems about $n$-admissible $k$-truples.

\begin{thm}  
\label{thm:tuplestable}
   Let $a=(0,a_2,\ldots,a_k)$ be a $(n-1)$-admissible $k$-tuple such that $gap(a)<p_n-1$ and $t$ a $n$-totative number. If $t$ satisfies $a$ then there is a $(n-1)$-totative number $t'$ such that $t=t'+\#(n-1)*j$ for some $j=0,1,\ldots,p_n-1$ and $t'$ satisfies $a$.
\end{thm}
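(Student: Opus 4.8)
The plan is to let Corollary \ref{cor:tot-col} supply the candidate and then verify it works. Since $n>1$ and $t$ is an $n$-totative number, that corollary produces a $(n-1)$-totative number $t'$ and an index $j\in\{0,1,\ldots,p_n-1\}$ with $t=t'+\#(n-1)*j$; in table language, $t$ lies in row $j$ of the $n$-primorial table and $t'$ is its representative in the $(n-1)$-primorial set $\{2,3,\ldots,\#(n-1)+1\}$. The only thing left is to show that $t'$ satisfies $a$, i.e., that $a[t']=(t',t'+a_2,\ldots,t'+a_k)$ is a $k$-tuple of $(n-1)$-totative numbers. For each $i$ this means checking two things: that $t'+a_i$ belongs to the $(n-1)$-primorial set, and that $gcd(t'+a_i,\#(n-1))=1$.

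I would dispose of the coprimality condition quickly at the end. For every $i$ we have $t'+a_i\equiv t+a_i\pmod{\#(n-1)}$ because $t\equiv t'\pmod{\#(n-1)}$. Since $t$ satisfies $a$, each $t+a_i$ is $n$-totative, hence $gcd(t+a_i,\#(n))=1$, hence $gcd(t+a_i,\#(n-1))=1$ as $\#(n-1)$ divides $\#(n)$; the congruence then gives $gcd(t'+a_i,\#(n-1))=1$. Together with membership in the $(n-1)$-primorial set this is precisely the statement that $t'+a_i$ is $(n-1)$-totative, so $a[t']$ is a $k$-tuple of $(n-1)$-totatives and the theorem follows.

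The substantive step — and the one using the hypothesis $gap(a)<p_n-1$ — is showing that no $t'+a_i$ overshoots $\#(n-1)+1$. I would argue by minimal counterexample. Suppose $t'+a_i>\#(n-1)+1$ for some $i$, and take the smallest such $i$. Since $a_1=0$ we have $t'+a_1=t'\le\#(n-1)+1$, so $i\ge 2$ and, by minimality, $t'+a_{i-1}\le\#(n-1)+1$. Because $a_i-a_{i-1}\le gap(a)<p_n-1$, this forces $\#(n-1)+1<t'+a_i<\#(n-1)+p_n$, so the integer $c=(t'+a_i)-\#(n-1)$ satisfies $2\le c\le p_n-1$. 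Now $t+a_i\equiv t'+a_i\equiv c\pmod{\#(n-1)}$, and we have already seen $gcd(t+a_i,\#(n-1))=1$, hence $gcd(c,\#(n-1))=1$. But every prime factor of $c$ is at most $c<p_n$, so it is one of $p_1,\ldots,p_{n-1}$ and divides $\#(n-1)$; since $c\ge 2$ has at least one prime factor, $gcd(c,\#(n-1))\ge 2$, a contradiction. Hence $2\le t'+a_i\le\#(n-1)+1$ for all $i$, completing the missing membership claim.

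I expect this overshoot argument to be the only delicate point, with the rest being routine manipulation of the definitions of primorial sets, rows, columns, and totatives. In the write-up I would stress why the gap bound cannot be weakened: if $gap(a)$ were allowed to equal $p_n-1$ then $c$ could equal $p_n$, which is coprime to $\#(n-1)$, and the contradiction would disappear. I would also note that the statement is essentially vacuous for small $n$ (for $n=2$ the bound $gap(a)<2$ forces $a=(0,1)$, which is not $1$-admissible, and Corollary \ref{cor:tot-col} in any case needs $n>1$), so the content appears only for $n\ge 3$, where $\#(n-1)\ge p_n$ and the inequalities above hold comfortably.
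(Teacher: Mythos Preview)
Your proof is correct and follows essentially the same route as the paper: both use Corollary~\ref{cor:tot-col} to produce $t'$ and then exploit the fact that the smallest $(n-1)$-totative is $p_n$ to rule out any $t'+a_i$ overshooting $\#(n-1)+1$, which is exactly what your minimal-counterexample argument with $c\in\{2,\ldots,p_n-1\}$ encodes. Your write-up is in fact more complete than the paper's, since you explicitly verify the coprimality $gcd(t'+a_i,\#(n-1))=1$, a step the paper's proof leaves implicit when it jumps from $t'+a_i\le\#(n-1)+1$ directly to ``$t'+a_i$ is $(n-1)$-totative.''
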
 
\begin{proof}The first $n$-totative column is generated by prime $p_n$ (any natural number greater than $1$ and lower than $p_n$ must be a multiple of some $p_i$ with $i=1,2,\ldots,n-1$) and the last $n$-totative column is generated by $\#(n-1)+1$. Therefore, the gap between the element in the last $n$-totative column of row $r$ and the first element in the $n$-totative column of row $r+1$ is $(\#(n-1)*r+p_n)-(\#(n-1)*r+1)=\#(n-1)-1=p_n-1$. Since $gap(a)<p_n-1$ it is impossible that the element in the last $n$-totative column of row $r$ and the element in the first column of row $r+1$ satisfy the $k$-tuple $a$. Therefore, it is impossible that a $n$-totative number $t$ defines a $n$-admissible $k$-tuple (with $gap(a)<p_n-1$) distributed in two or more different rows, i.e, $t+a_i\leq \#(n-1)*(j+1)+1$ for all $i=1,2,\ldots,k$. According to Corollary \ref{cor:tot-col}, $t=t'+\#(n-1)j$ for some $j=0,1,\ldots,p_n-1$, then $t+a_i=t'+a_i+\#(n-1)*j \leq \#(n-1)*(j+1)+1$, so $t'+a_i\leq \#(n-1)+1$, i.e., $t'+a_i$ is $(n-1)$-totative for all $i=1,2,\ldots,k$.
\end{proof}

\begin{thm}  
\label{thm:tuplescount}
   Let $a=(0,a_2,\ldots,a_k)$ be a strong $(n-1)$-admissible $k$-tuple such that $gap(a)<p_n-1$. For any isolated $(n-1)$-totative $k$-tuple satisfying $a$ there are $p_n-k$ isolated $n$-totative $k$-tuples satisfying $a$.
\end{thm}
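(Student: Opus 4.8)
The plan is to fix an isolated $(n-1)$-totative $k$-tuple, call its first element $t'$, and to follow its $p_n$ ``lifts'' $t_j := t' + \#(n-1)\,j$, $j = 0,1,\ldots,p_n-1$. By hypothesis $t'$ is $(n-1)$-totative, $a[t']$ is a $(n-1)$-totative $k$-tuple, and neither $a[t'-a_2]$ nor $a[t'+a_2]$ is a $(n-1)$-totative $k$-tuple. By Corollary \ref{cor:tot-col} combined with Theorem \ref{thm:tuplestable}, any $n$-totative $k$-tuple satisfying $a$ whose first element is congruent to $t'$ modulo $\#(n-1)$ must be one of the $a[t_j]$ (distinct $(n-1)$-totatives lie in distinct residue classes modulo $\#(n-1)$, since $\mathbb{Z}^{\#}_{n-1}$ is a complete residue system). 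So it suffices to (i) count the $j$ for which $a[t_j]$ is an $n$-totative $k$-tuple, and (ii) check that each such $a[t_j]$ is isolated.

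For (i), first I would record the elementary facts that $2 \le t'+a_i \le \#(n-1)+1$ for every $i$ (each $t'+a_i$ is $(n-1)$-totative, hence lies in $\mathbb{Z}^{\#}_{n-1}$), whence $t_j + a_i = (t'+a_i) + \#(n-1)\,j$ lies in $\{2,\ldots,\#(n)+1\}$ and is coprime to $\#(n-1)$ (it is congruent to $t'+a_i$, which is coprime to $\#(n-1)$). Thus $a[t_j]$ is an $n$-totative $k$-tuple exactly when $p_n \nmid t_j + a_i$ for all $i$, i.e. when $j \not\equiv -(t'+a_i)\,\#(n-1)^{-1} \pmod{p_n}$ for all $i$, where $\#(n-1)^{-1}$ is the inverse of $\#(n-1)$ modulo $p_n$. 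Because $a$ is strong $(n-1)$-admissible, the definition applied with $m = n$ gives that $a_1,\ldots,a_k$ are pairwise distinct modulo $p_n$, so these $k$ forbidden residues are pairwise distinct and precisely $p_n - k$ values of $j$ survive; moreover $p_n - k > 0$, since the same hypothesis with $m = n-1$ forces $k \le p_{n-1} < p_n$.

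For (ii), I would argue each surviving $a[t_j]$ is isolated, i.e. that neither $a[t_j - a_2]$ nor $a[t_j + a_2]$ is an $n$-totative $k$-tuple. Suppose, for contradiction, that $a[t_j + a_2]$ is an $n$-totative $k$-tuple; then $t_j + a_2$ is $n$-totative and satisfies $a$, so Theorem \ref{thm:tuplestable} produces a $(n-1)$-totative $s'$ with $a[s']$ a $(n-1)$-totative $k$-tuple and $s' \equiv t_j + a_2 \equiv t' + a_2 \pmod{\#(n-1)}$. The key observation is that $t' + a_2$ itself lies in $\mathbb{Z}^{\#}_{n-1} = \{2,\ldots,\#(n-1)+1\}$: indeed $t' \le t'+a_2 \le t'+a_k \le \#(n-1)+1$ since $a_2 \le a_k$. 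As $\mathbb{Z}^{\#}_{n-1}$ is a complete residue system modulo $\#(n-1)$, this forces $s' = t'+a_2$, so $a[t'+a_2]$ is a $(n-1)$-totative $k$-tuple, contradicting the isolation of $a[t']$. The case $a[t_j - a_2]$ is symmetric once one checks $t'-a_2 \in \mathbb{Z}^{\#}_{n-1}$: the smallest $(n-1)$-totative is $p_n$ (every integer in $\{2,\ldots,p_n-1\}$ has a prime factor dividing $\#(n-1)$), so $t' \ge p_n$, while $a_2 \le gap(a) < p_n - 1$; hence $2 \le t'-a_2 \le \#(n-1)+1$, and the same argument forces $s' = t'-a_2$ and yields the same contradiction.

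Putting (i) and (ii) together, exactly $p_n - k$ of the $t_j$ give an $n$-totative $k$-tuple satisfying $a$, and every one of them is isolated, which is the claim. I expect step (ii) to be the genuine obstacle rather than the counting: because ``$n$-totative'' bundles the membership requirement $m \in \{2,\ldots,\#(n)+1\}$ together with coprimality, one cannot descend from level $n$ to level $n-1$ by a bare reduction modulo $\#(n-1)$ — the shifted tuple $a[t_j \pm a_2]$ may sit in a later row of the $n$-primorial table, with top entry $t_j + a_2 + a_k$ exceeding $\#(n-1)+1$ after reduction. Theorem \ref{thm:tuplestable} is precisely what absorbs this row-straddling, and the only additional input is the elementary fact that $\mathbb{Z}^{\#}_{n-1}$ is a complete set of residues modulo $\#(n-1)$ containing $t' \pm a_2$.
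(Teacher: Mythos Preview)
Your proof is correct and follows the same approach as the paper: lift the isolated $(n-1)$-totative $k$-tuple $a[t']$ to the $p_n$ rows of the $n$-primorial table and use Lemma~\ref{lem:mark-one} together with the strong condition (distinct residues of the $a_i$ modulo $p_n$) to see that exactly $k$ distinct rows are eliminated, leaving $p_n-k$. Your argument is in fact more complete than the paper's, which simply asserts that the $p_n$ candidates are isolated, whereas your step~(ii) actually verifies this by descending via Theorem~\ref{thm:tuplestable} and using that $\mathbb{Z}^{\#}_{n-1}$ is a complete residue system containing $t'\pm a_2$.
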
 
\begin{proof}If $t$ is a $n-1$-totative number such that $t$ satisfies $a$ and $a[t]$ is an isolated $(n-1)$-totative $k$-tuple then there are $p_n$ candidate isolated $n$-totative $k$-tuples (one per each row of the $n$-primorial table on the same columns of the isolated $(n-1)$-totative $k$-tuple. Since $p_n$ marks only one element in a single $n$-totative column defined by $a$ (Lemma \ref{lem:mark-one}) and $p_n$ cannot mark two different columns $t+a_i$ and $t+a_j$ $i\neq j$ ($a$ is strong), $p_n$ marks $k$ different candidate $n$-admissible $k$-tuples generated by $a[t]$. Therefore, $count_a(m)=count_a(m-1)*(p_m-k)$.
\end{proof}

\subsection{Twin Totative Numbers and Twin Primes}
We can now count twin $n$-totatives and compare them against twin primes.

\begin{cor}
\label{cor:n-twin}
   The number of $n$-twin couples ($twin(n)$) can be defined recursively as $twin(1)=0$, $twin(2)=1$ and $twin(n)=(p_n-2)*twin(n-1)$ for $n>2$.
\end{cor}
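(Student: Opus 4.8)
The plan is to recognize $twin(n)$ as the quantity $count_a(n)$ from Theorem~\ref{thm:tuplescount} applied to the $2$-tuple $a=(0,2)$ (so $k=2$), and then simply read off the recurrence. First I would dispose of the base cases by direct inspection: $tot(1)=\{3\}$ has a single element, so no two of its members differ by $2$ and $twin(1)=0$; and $tot(2)=\{5,7\}$ with $7=5+2$, so $(5,7)$ is the unique $2$-twin couple and $twin(2)=1$. For the inductive step $n>2$ I would invoke Theorems~\ref{thm:tuplestable} and~\ref{thm:tuplescount} with $a=(0,2)$.

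Next I would check that the hypotheses of Theorem~\ref{thm:tuplescount} hold along the whole chain $m=3,4,\ldots,n$. The tuple $a=(0,2)$ is strong $(m-1)$-admissible for every $m\ge 3$: it is $m$-strong because $2\not\equiv 0\pmod{p_j}$ for every $j\ge 2$, and it is $(m-1)$-admissible because, by the very recurrence being proved (which forces $twin(m)\ge twin(2)=1>0$ by an easy downward induction), a twin $m$-totative pair exists for each $m\ge 2$. Its gap is $gap(a)=2$, and $2<4\le p_m-1$ for $m\ge 3$, so the gap condition is met. Finally, as noted in the discussion of $n$-twin couples, for $m>2$ every $m$-totative $2$-tuple satisfying $(0,2)$ is isolated, since $(0,2,4)$ fails to be $m$-admissible; hence for $m\ge 3$ the count of $m$-twin couples equals the count of isolated $m$-totative $2$-tuples satisfying $a$, i.e.\ $twin(m)=count_a(m)$. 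One also checks the boundary value $count_a(2)=1$: the unique couple $(5,7)$ is isolated because $a[3]=(3,5)$ and $a[7]=(7,9)$ are not $2$-totative $2$-tuples.

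With these verifications in place, Theorem~\ref{thm:tuplescount} yields $count_a(m)=count_a(m-1)\,(p_m-2)$ for every $m\ge 3$; combining with $twin(m)=count_a(m)$ for $m\ge 2$ gives $twin(n)=(p_n-2)\,twin(n-1)$ for all $n>2$, alongside $twin(1)=0$ and $twin(2)=1$. The only genuinely delicate point is aligning the ``count of isolated $m$-totative $2$-tuples'' produced by Theorem~\ref{thm:tuplescount} with the ``count of $m$-twin couples'' at the first step $m=3$, precisely the step where $m-1=2$ sits at the threshold below which twin couples need not be isolated; once $(5,7)$ is confirmed isolated this obstacle disappears and the induction runs cleanly.
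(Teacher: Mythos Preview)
Your proof is correct and follows essentially the same approach as the paper: verify the base cases directly, then apply Theorem~\ref{thm:tuplescount} to the strong admissible $2$-tuple $a=(0,2)$ with $k=2$. You are more thorough than the paper in checking the hypotheses along the full chain $m\ge 3$ (the gap bound $2<p_m-1$, strength at every level, and especially the isolation of $(5,7)$ at level $m=2$, which the paper glosses over), but the underlying argument is the same.
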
 
\begin{proof}
Clearly $twin(1)=0$ and $twin(2)=1$, see Tables \ref{tab:1-primorial} and \ref{tab:2-primorial} respectively. Notice that $gap\left((0,2)\right)=2$ and $p_3=5$, so $gap\left((0,2)\right)<p_3-1$. Since $(0,2)$ is a strong $3$-admissible $2$-tuple and every $n$-twin couple is isolated for $n>2$, then by Theorem \ref{thm:tuplescount} we have that $twin(n)=(p_n-2)*twin(n-1)$ for all $n>2$. 
\end{proof}

Table \ref{tab:twin} shows the relationship between the count of $n$-totative twin couples ($twin(n)$) and the total number of twin prime couples in the $n$-primorial set, i.e., up to $\#(n)+1$ ($twin_{*}(\#(n)+1)$).

\begin{table}[htbp]
\centering
\begin{tabular}{|r|r|r|r|r|}
\hline
$n$ & $\#(n)$ & $twin(n)$ & $twin_{*}(\#(n)+1)$ & $\frac{twin(n)}{twin_{*}(\#(n)+1)}$\\
\hline
3 & 30 & 3 & 5 & 0.6\\
\hline
4 & 210 & 15 & 15 & 1\\
\hline
5 & 2310 & 135 & 70 & 1.92857\\
\hline
6 & 30030 & 1485 & 468 & 3.17308\\
\hline
7 & 510510 & 22275 & 4636 & 4.80479\\
\hline
8 & 9699690 & 378675 & 57453 & 6.59104\\
\hline
9 & 223092870 & 7952175 & 896062 & 8.87458\\
\hline
10 & 6469693230 & 214708725 & 18463713 & 11.6287\\
\hline
\end{tabular}
\caption{\label{tab:twin} Relation between the count of $n$-totative twin couples ($twin(n)$) and the total number of twin prime couples the $n$-primorial set, i.e., up to $\#(n)+1$ ($twin_{*}(\#(n)+1)$).}
\end{table}

\subsection{Cousin Totative Numbers and Cousin Primes}
We can now count cousin $n$-totatives and compare them against cousin primes. 

 \begin{cor}
\label{cor:n-cousin}
   The number of $n$-cousin couples ($cousin(n)$) can be defined recursively as $cousin(1)=cousin(2)=0$, $cousin(3)=3$, and $cousin(n)=(p_n-2)*cousin(n-1)$ for $n>3$.
\end{cor}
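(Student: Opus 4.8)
The plan is to mirror the proof of Corollary \ref{cor:n-twin}, replacing the admissible $2$-tuple $(0,2)$ by $(0,4)$. First I would settle the three base cases by inspecting the small totative sets listed in Section \ref{sec:tot}: $tot(1)=\{3\}$ contains no pair differing by $4$, so $cousin(1)=0$; in $tot(2)=\{5,7\}$ the only candidate pair would be $(5,9)$, but $9\notin\mathbb{Z}^{\#}_2$, so $cousin(2)=0$; and in $tot(3)=\{7,11,13,17,19,23,29,31\}$ exactly the pairs $(7,11)$, $(13,17)$, $(19,23)$ differ by $4$ (the would-be pairs $(25,29)$ and $(27,31)$ being excluded since $25=5^2$ and $27=3^3$ are not $3$-totative), giving $cousin(3)=3$.

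For the recursive step I would check that the hypotheses of Theorem \ref{thm:tuplescount} are met for $a=(0,4)$ and every $n\geq 4$. Since $p_m\nmid 4$ for every $m\geq 2$, we get $mod(0,p_m)\neq mod(4,p_m)$ for all $m\geq 3$, so $(0,4)$ is a strong $3$-admissible $2$-tuple --- its $3$-admissibility being witnessed by any cousin couple of $3$-totatives, e.g. $(7,11)$ --- and hence a strong $(n-1)$-admissible $2$-tuple for every $n\geq 4$. Moreover $gap\big((0,4)\big)=4$, while $p_n-1\geq p_4-1=6>4$ for all $n\geq 4$, so $gap(a)<p_n-1$ throughout the range where the recursion is asserted. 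Finally, as recorded in the discussion of $n$-cousin couples, the $3$-tuple $(0,4,8)$ is not admissible (modulo $3$, one of $t+4$, $t+8$ is a multiple of $3$), so no three $m$-totatives lie in arithmetic progression with step $4$; consequently every $m$-cousin couple is isolated for $m\geq 3$, and in particular the three $3$-cousin couples found above are isolated $3$-totative $2$-tuples (the numbers $3,9,15,21,27$ that would have to be $3$-totative are all multiples of $3$).

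With these hypotheses in place, Theorem \ref{thm:tuplescount} with $k=2$ shows that each isolated $(n-1)$-totative $2$-tuple satisfying $(0,4)$ yields exactly $p_n-2$ isolated $n$-totative $2$-tuples satisfying $(0,4)$; and Theorem \ref{thm:tuplestable} (applicable since $gap(a)<p_n-1$) guarantees that every such $n$-totative $2$-tuple arises from a unique $(n-1)$-totative one, so the correspondence is a bijection onto blocks of size $p_n-2$. Identifying ``isolated $m$-totative $2$-tuple satisfying $(0,4)$'' with ``$m$-cousin couple'' for $m\geq 3$, this gives $cousin(n)=(p_n-2)\,cousin(n-1)$ for all $n>3$, which together with $cousin(1)=cousin(2)=0$ and $cousin(3)=3$ is the claim.

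I expect the only delicate points to be bookkeeping: making sure the isolation hypothesis of Theorem \ref{thm:tuplescount} is genuinely available at the first application $n=4$ (starting from the $3$-totative couples, which is why the recursion is seeded at $n=3$ and asserted only for $n>3$), and verifying the inequality $gap\big((0,4)\big)<p_n-1$ at its tightest instance $n=4$, where it reads $4<6$. Everything else transcribes the twin-prime argument.
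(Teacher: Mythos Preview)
Your proposal is correct and follows essentially the same route as the paper: verify the base cases by inspection, check that $(0,4)$ is a strong admissible $2$-tuple with $gap=4<p_n-1$ for $n\geq 4$, note that cousin couples are isolated, and invoke Theorem~\ref{thm:tuplescount} with $k=2$. Your write-up is in fact slightly more careful than the paper's own proof: you explicitly verify isolation already at level $n=3$ (which is what Theorem~\ref{thm:tuplescount} actually requires for the first application at $n=4$, whereas the paper only asserts isolation for $n>3$), and you invoke Theorem~\ref{thm:tuplestable} to close the bijection, a step the paper leaves implicit in its statement of Theorem~\ref{thm:tuplescount}.
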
 
\begin{proof}
Clearly $cousin(1)=0$, $cousin(2)=0$  and $cousin(3)=3$, see Tables \ref{tab:1-primorial} to \ref{tab:3-primorial} respectively. Notice that $gap\left((0,4)\right)=4$ and $p_4=7$, so $gap\left((0,4)\right)<p_4-1$. Since $(0,4)$ is a strong $4$-admissible $2$-tuple and every $n$-cousin couple is isolated for $n>3$, then by Theorem \ref{thm:tuplescount} we have that $cousin(n)=(p_n-2)*cousin(n-1)$ for all $n>3$.
\end{proof}

\begin{cor}
$twin(n)=cousin(n)$ for all $n\geq3$.
\end{cor}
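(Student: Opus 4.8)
The plan is to prove this by a one-line induction on $n$, exploiting the fact that Corollaries \ref{cor:n-twin} and \ref{cor:n-cousin} endow $twin$ and $cousin$ with the \emph{same} first-order recurrence past a \emph{common} anchor point. Concretely, I would first pin down the base case $n=3$: Corollary \ref{cor:n-cousin} gives $cousin(3)=3$ outright, while Corollary \ref{cor:n-twin} gives $twin(3)=(p_3-2)\cdot twin(2)=3\cdot 1=3$, so $twin(3)=cousin(3)$.

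For the inductive step I would assume $twin(n-1)=cousin(n-1)$ for some $n\geq 4$ (so that $n-1\geq 3$ and both sequences are already defined and equal there). Since $n>3$, Corollary \ref{cor:n-cousin} yields $cousin(n)=(p_n-2)\cdot cousin(n-1)$, and since $n>2$, Corollary \ref{cor:n-twin} yields $twin(n)=(p_n-2)\cdot twin(n-1)$. Plugging in the inductive hypothesis, $twin(n)=(p_n-2)\cdot twin(n-1)=(p_n-2)\cdot cousin(n-1)=cousin(n)$, which closes the induction and proves $twin(n)=cousin(n)$ for all $n\geq 3$.

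There is no genuine obstacle here; the only point requiring care is the bookkeeping on the ranges of validity. The twin recurrence is asserted for $n>2$ but the cousin recurrence only for $n>3$, so the induction must be anchored precisely at $n=3$ (not at $n=1$ or $n=2$, where the two sequences disagree) and the recursive step invoked only for $n\geq 4$. Once both sequences are recognized as solutions of the identical recurrence $x(n)=(p_n-2)\,x(n-1)$ sharing the value $3$ at $n=3$, the equality is forced. One could equally phrase the argument non-inductively by noting $twin(n)=3\prod_{i=4}^{n}(p_i-2)=cousin(n)$ for $n\geq 3$, but the induction is the cleanest presentation.
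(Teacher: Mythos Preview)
Your proposal is correct and is precisely the argument the paper intends: the corollary is stated without proof because it follows immediately from the identical recurrence $(p_n-2)\,x(n-1)$ in Corollaries~\ref{cor:n-twin} and~\ref{cor:n-cousin} together with the common value $3$ at $n=3$. Your care about anchoring at $n=3$ (since the sequences disagree for $n<3$) and invoking the cousin recurrence only for $n\geq 4$ is exactly the right bookkeeping.
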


Table \ref{tab:cousin} shows the relationship between the count of $n$-totative cousin couples ($cousin(n)$) and the total number of cousin prime couples in the $n$-primorial set, i.e., up to $\#(n)+1$ ($cousin_{*}(\#(n)+1)$).

\begin{table}[htbp]
\centering
\begin{tabular}{|r|r|r|r|r|}
\hline
$n$ & $\#(n)$ & $cousin(n)$ & $cousin_{*}(\#(n)+1)$ & $\frac{cousin(n)}{cousin_{*}(\#(n)+1)}$\\
\hline
3 & 30 & 3 & 4 & 0.75\\
\hline
4 & 210 & 15 & 14 & 1.07143\\
\hline
5 & 2310 & 135 & 71 & 1.90141\\
\hline
6 & 30030 & 1485 & 468 & 3.17308\\
\hline
7 & 510510 & 22275 & 4630 & 4.81102\\
\hline
8 & 9699690 & 378675 & 57065 & 6.63585\\
\hline
9 & 223092870 & 7952175 & 896737 & 8.8679\\
\hline
10 & 6469693230 & 214708725 & 18460319 & 11.6308\\
\hline
\end{tabular}
\caption{\label{tab:cousin} Relation between the count of $n$-totative cousin couples ($cousin(n)$) and the total number of cousin prime couples the $n$-primorial set, i.e., up to $\#(n)+1$ ($cousin_{*}(\#(n)+1)$).}\end{table}

\subsection{Sexy Totative Numbers and Sexy Primes}
We can now count sexy $n$-totatives and compare them against sexy primes. 

\begin{cor}
\label{cor:n-sexy}
   The number of $n$-sexy quadruplets ($quad(n)$) can be defined recursively as $quad(1)=quad(2)=0$, $quad(3)=1$, $quad(4)=6$, and $quad(n)=(p_n-4)*quad(n-1)$ for $n>4$.
\end{cor}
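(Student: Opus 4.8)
The plan is to follow the template of Corollaries~\ref{cor:n-twin} and~\ref{cor:n-cousin}, working with the admissible tuple $a=(0,6,12,18)$ and invoking Theorems~\ref{thm:tuplestable} and~\ref{thm:tuplescount}, but paying attention to one extra subtlety at the transition from $n=3$ to $n=4$.

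First I would dispose of the four base cases directly. For $n=1,2$ the primorial set $\mathbb{Z}^{\#}_n$ is too small to contain $t+18$ for any $n$-totative $t$, so $quad(1)=quad(2)=0$. For $n=3$ one only needs to check the $3$-totatives $t$ with $t+18\le\#(3)+1=31$, i.e.\ $t\in\{7,11,13\}$; among these, only $t=11$ gives $(11,17,23,29)$ with all four entries coprime to $30$, so $quad(3)=1$. For $n=4$ I would either quote the count produced by the program in \cite{GomezPrimesGit}, or argue by the Chinese Remainder Theorem: a $4$-totative quadruplet forces the residue of $t$ modulo $210$ to be odd, to be $\not\equiv0\pmod3$, to be $\equiv1\pmod5$, and to avoid $0,1,2,3\pmod7$, which leaves $1\cdot2\cdot1\cdot3=6$ residue classes; each such class has a unique representative in $\mathbb{Z}^{\#}_4$, and a short check shows that representative is at most $193$, so the whole quadruplet fits inside $\mathbb{Z}^{\#}_4$, giving $quad(4)=6$.

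Next I would record the hypotheses required to apply Theorem~\ref{thm:tuplescount} along $a=(0,6,12,18)$. As already observed when the $n$-sexy quadruplet class was introduced, $a$ is $n$-strong and $n$-admissible for $n\ge3$ (the pairwise differences of $a$ are $6$, $12$, $18$, whose only prime factors are $2$ and $3$, so $mod(a_i,p_m)\neq mod(a_j,p_m)$ for all $m\ge3$), and every $n$-sexy quadruplet is isolated because $(0,6,12,18,24)$ is not $n$-admissible: modulo $5$ the numbers $t,t+6,\ldots,t+24$ run through all residue classes, so for any candidate $t$ one of them is a multiple of $5$. The binding constraint is then the gap condition: $gap(a)=6$, so $gap(a)<p_n-1$ is equivalent to $p_n\ge11$, i.e.\ to $n\ge5$. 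This is precisely why the recursion can only start at $n>4$, and why $quad(4)$ must be supplied as a separate base value; note $p_4-1=6$ fails the strict inequality, and indeed $quad(4)=6\ne(p_4-4)*quad(3)=3$, so the recursion genuinely does not bridge $n=3$ to $n=4$.

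Finally, for $n>4$ I would invoke Theorem~\ref{thm:tuplestable} to see that every $n$-sexy quadruplet lies within a single row of the $n$-primorial table, occupying the same $n$-totative columns as some $(n-1)$-sexy quadruplet, and then Theorem~\ref{thm:tuplescount} with $k=4$ to conclude that each of the $quad(n-1)$ isolated $(n-1)$-sexy quadruplets lifts to exactly $p_n-4$ isolated $n$-sexy quadruplets, with lifts of distinct base quadruplets remaining distinct; this yields $quad(n)=(p_n-4)*quad(n-1)$. I expect the main obstacle to be not the recursive step, which is essentially identical to the twin and cousin arguments, but establishing $quad(4)=6$ rigorously: because the gap hypothesis fails at the step $n=3\to4$, this value has to come from a direct (Chinese Remainder Theorem or computational) count rather than from the recursion itself.
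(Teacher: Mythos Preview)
Your proposal is correct and follows essentially the same route as the paper: verify the four base cases by inspection, check that $(0,6,12,18)$ is strong, isolated, and satisfies $gap(a)=6<p_5-1$, then invoke Theorem~\ref{thm:tuplescount} with $k=4$ for $n>4$. The only difference is that the paper obtains $quad(4)=6$ by pointing to Table~\ref{tab:4-totient-columns}, whereas you give a Chinese Remainder Theorem count; your extra discussion of why the recursion cannot bridge $n=3$ to $n=4$ is more explicit than the paper's but not a different argument.
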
 
\begin{proof} Clearly $quad(1)=quad(2)=0$, $quad(3)=1$  and $quad(4)=6$, see Tables \ref{tab:1-primorial} to \ref{tab:4-totient-columns} respectively.  Notice that $gap\left((0,6,12,18)\right)=6$ and $p_5=11$, so $gap\left((0,6,12,18)\right)<p_5-1$. Since $(0,6,12,18)$ is a strong $5$-admissible $4$-tuple and every $n$-quadruple is isolated for $n>4$, then by Theorem \ref{thm:tuplescount} we have that $quad(n)=(p_n-4)*quad(n-1)$ for all $n>4$.
\end{proof}

\begin{lem}
\label{lem:n-sexy}
   The number of isolated $n$-sexy triplets ($itriple(n)$), $n$-sexy triplets ($triple(n)$), isolated $n$-sexy couples ($isexy(n)$), and $n$-sexy couples ($sexy(n)$) can be defined as follow:
   \begin{enumerate}
       \item $itriple(1)=itriple(2)=0$, $itriple(3)=1$, $itriple(4)=4$, and $itriple(n)=(p_n-3)*itriple(n-1)+2*quad(n-1)$ for $n>4$.
       \item $triple(n)=itriple(n)+2*quad(n)$ for all $n>0$.
       \item $isexy(1)=isexy(2)=isexy(3)=0$, $isexy(4)=4$, and $isexy(n)=(p_n-2)*isexy(n-1)+2*itriple(n-1)+2*quad(n-1)$ for $n>4$.
       \item $sexy(n)=isexy(n)+2*itriple(n)+3*quad(n)$.
   \end{enumerate}
\end{lem}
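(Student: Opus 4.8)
The plan is to analyze how each class of $n$-sexy pattern (couple, triplet, quadruplet, and their isolated variants) transforms when passing from row $n-1$ to row $n$ in the primorial table, exactly as in the proof of Corollary~\ref{cor:n-sexy}, but now tracking how non-isolated patterns get "split" by the single mark that $p_n$ places in each $n$-totative column. The base cases for all four sequences are read directly from Tables~\ref{tab:1-primorial}--\ref{tab:4-totient-columns} (with $itriple(4)=4$ because two of the six triplets in $\mathbb{Z}^{\#}_4$ are the non-isolated ones sitting inside the $(0,6,12,18)$ quadruplet, and similarly $isexy(4)=4$). Since $gap\big((0,6,12,18)\big)=6<p_5-1=10$, Theorems~\ref{thm:tuplestable} and~\ref{thm:tuplescount} apply for all $m\geq 5$, so every sexy configuration in row $m$ comes from one in row $m-1$ lying in the same $n$-totative columns, and $p_m$ marks exactly one element in each such column.

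For item~(2), the key observation is that every $n$-sexy triplet $(t,t+6,t+12)$ is either isolated or is the "left" triplet $(p,p+6,p+12)$ or the "right" triplet $(p+6,p+12,p+18)$ of some $n$-sexy quadruplet $(p,p+6,p+12,p+18)$; conversely each quadruplet contributes exactly those two non-isolated triplets, and these two triplets are distinct and never isolated. Hence $triple(n)=itriple(n)+2\,quad(n)$ for all $n$ (one checks it holds at $n\leq 4$ too). Item~(4) is the same bookkeeping one level further: an $n$-sexy couple $(t,t+6)$ is isolated, or it is one of the two sub-couples of an isolated triplet (giving the $2\,itriple(n)$ term), or it is one of the three sub-couples $(p,p+6)$, $(p+6,p+12)$, $(p+12,p+18)$ of a quadruplet (giving the $3\,quad(n)$ term), and these cases are mutually exclusive and exhaustive, so $sexy(n)=isexy(n)+2\,itriple(n)+3\,quad(n)$.

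The recursions in~(1) and~(3) require the splitting analysis. For~(1): take an isolated $(n-1)$-sexy triplet on three fixed $n$-totative columns; it generates $p_n$ candidate triplets in row $n$, and since $(0,6,12)$ is strong (and $gap<p_n-1$), $p_n$ marks exactly one column among the three in at most $p_n$ of these candidates, destroying $3$ of them as triplets; the remaining $p_n-3$ stay isolated triplets. But marking can also \emph{create} new isolated triplets out of non-isolated ones: a non-isolated $(n-1)$-triplet lives inside an $(n-1)$-quadruplet $(p,p+6,p+12,p+18)$, and in the unique row where $p_n$ marks the outer column ($p$ for the right triplet, $p+18$ for the left triplet) the surviving triplet becomes isolated; each of the $quad(n-1)$ quadruplets yields $2$ such newly-isolated triplets, giving the $+2\,quad(n-1)$ term. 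This is the step I expect to be the main obstacle: one must check carefully that no double-counting occurs (the newly isolated triplets are genuinely new, the $p_n-3$ count is not reduced further by interaction with neighbouring marks, and a triplet cannot simultaneously be "the surviving part of a broken quadruplet" in two ways). For~(3): an isolated $(n-1)$-couple on two columns gives $p_n$ candidates, $p_n-2$ remain isolated couples ($(0,6)$ is strong), while couples arising as sub-couples of isolated triplets or of quadruplets contribute additional isolated couples in exactly the rows where $p_n$'s mark isolates them — an isolated triplet contributes $2$ (mark the middle-adjacent outer columns appropriately) and a quadruplet contributes $2$, yielding $(p_n-2)\,isexy(n-1)+2\,itriple(n-1)+2\,quad(n-1)$. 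Throughout, the verification reduces to a finite residue computation modulo $p_n$ using that the tuples $(0,6)$, $(0,6,12)$, $(0,6,12,18)$ are strong and have gap $6<p_n-1$, so all the "at most one mark per column, marks in distinct columns land in distinct rows" facts from Lemma~\ref{lem:mark-one} and Theorem~\ref{thm:tuplescount} are available.
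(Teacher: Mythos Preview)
Your approach is exactly what the paper intends: its entire proof of Lemma~\ref{lem:n-sexy} is the single sentence ``We left proofs to the reader (just consider which columns are marked by prime number $p_n$ in each case),'' and your proposal is precisely that column-marking analysis carried out in detail. The decomposition you give for items~(2) and~(4), and the splitting analysis for the recursions in~(1) and~(3) (isolated $(n{-}1)$-configurations contributing $p_n{-}k$ survivors, with extra isolated $n$-patterns created when $p_n$ marks an outer column of a longer $(n{-}1)$-pattern), is correct and matches the hint.
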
 
\begin{proof} We left proofs to the reader (just consider which columns are marked by prime number $p_n$ in each case).
\end{proof}
\begin{cor}
\label{cor:n-sexy}
    The number of $n$-sexy couples ($sexy(n)$) can be defined recursively as $sexy(1)=sexy(2)=0$, $sexy(3)=5$, $sexy(4)=30$, and $sexy(n)=(p_n-2)*sexy(n-1)$ for all $n>4$.
\end{cor}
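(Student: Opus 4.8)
The plan is to reduce the claim to pure bookkeeping over the identities packaged in Lemma~\ref{lem:n-sexy} together with the quadruplet recursion $quad(n)=(p_n-4)\cdot quad(n-1)$ valid for $n>4$. First I would dispose of the base cases. The values $sexy(1)=sexy(2)=0$ are immediate from Tables~\ref{tab:1-primorial} and \ref{tab:2-primorial}, and for $n=3,4$ one reads them off the $3$-primorial table (Table~\ref{tab:3-primorial}) and the $4$-totative columns (Table~\ref{tab:4-totient-columns}); alternatively one evaluates the closed form $sexy(n)=isexy(n)+2\cdot itriple(n)+3\cdot quad(n)$ of Lemma~\ref{lem:n-sexy} at the known data $isexy(3)=0,\ itriple(3)=1,\ quad(3)=1$ (giving $sexy(3)=0+2+3=5$) and $isexy(4)=4,\ itriple(4)=4,\ quad(4)=6$ (giving $sexy(4)=4+8+18=30$).

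For $n>4$ the argument is a single substitution. Into the identity $sexy(n)=isexy(n)+2\cdot itriple(n)+3\cdot quad(n)$ I would plug the component recursions $isexy(n)=(p_n-2)\cdot isexy(n-1)+2\cdot itriple(n-1)+2\cdot quad(n-1)$ and $itriple(n)=(p_n-3)\cdot itriple(n-1)+2\cdot quad(n-1)$ from Lemma~\ref{lem:n-sexy}, together with $quad(n)=(p_n-4)\cdot quad(n-1)$. Collecting the outcome by the three quantities $isexy(n-1)$, $itriple(n-1)$, $quad(n-1)$: the coefficient of $isexy(n-1)$ is $p_n-2$; the coefficient of $itriple(n-1)$ is $2+2(p_n-3)=2(p_n-2)$; and the coefficient of $quad(n-1)$ is $2+4+3(p_n-4)=3(p_n-2)$. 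Hence
$sexy(n)=(p_n-2)\bigl(isexy(n-1)+2\cdot itriple(n-1)+3\cdot quad(n-1)\bigr)=(p_n-2)\cdot sexy(n-1)$,
again by the closed form of Lemma~\ref{lem:n-sexy}.

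The one point to be careful about is the boundary: the component recursions in Lemma~\ref{lem:n-sexy} and the quadruplet recursion all hold from $n=5$ onward, so they are correctly seeded by the $n=4$ data computed above and no separate treatment of $n=5$ is required. I do not expect any obstacle in the algebra itself — the corollary is essentially a one-line consequence of Lemma~\ref{lem:n-sexy}. The genuinely substantive step lives inside Lemma~\ref{lem:n-sexy}, whose (omitted) proof rests on the case analysis of which $n$-totative columns the prime $p_n$ can mark within a sexy block $(0,6,12,18)$; that case analysis is where care is needed and where a referee would probe, but it is not part of this corollary. As a numerical sanity check, the recursion predicts $sexy(5)=9\cdot 30=270$ and $sexy(6)=11\cdot 270=2970$, which one can cross-check against the program of \cite{GomezPrimesGit}.
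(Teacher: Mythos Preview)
Your proposal is correct and follows essentially the same approach as the paper: both arguments verify the base cases via item (4) of Lemma~\ref{lem:n-sexy} and then establish the recursion by combining that closed form with the three component recursions and the quadruplet recursion. The only cosmetic difference is direction---the paper expands $(p_n-2)\cdot sexy(n-1)$ and regroups to reach $sexy(n)$, while you substitute into $sexy(n)$ and factor out $p_n-2$---but the underlying algebra is identical.
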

\begin{proof}
    Clearly, $sexy(1)=sexy(2)=0$, $sexy(3)=5$, and $sexy(4)=30$ by computing $sexy(n)$ with definition 4) of Lemma \ref{lem:n-sexy}. Now, by item 4) of Lemma \ref{lem:n-sexy} we have $(p_n-2)*sexy(n-1)=(p_n-2)*\left(isexy(n-1)+2*itriple(n-1)+3*quad(n-1)\right)$. By distributing the product on the right side, then expressing $2(p_n-2)$ as $2(p_n-3)+2$ and $3(p_n-2)$ as $3(p_n-4)+2+4$ and organizing terms we have $(p_n-2)*sexy(n-1)=(p_n-2)*isexy(n-1)+2*itriple(n-1)+2*quad(n-1)+2*(p_n-3)*itriple(n-1)+4*quad(n-1)+3*(p_n-4)*quad(n-1)$. Now, by grouping terms $1-3$, $4-5$, and using Lemma \ref{lem:n-sexy} we have $(p_n-2)*sexy(n-1)=isexy(n)+2*itriple(n)+3*quad(n)=sexy(n)$.
\end{proof}
\begin{cor}
    $sexy(n)=2*twin(n)$ for all $n>=4$.
\end{cor}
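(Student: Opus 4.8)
The plan is a short induction on $n$, feeding the two recursions from Corollaries \ref{cor:n-twin} and \ref{cor:n-sexy} against each other. First I would check the base case $n=4$ directly: from the stated values $twin(4)=15$ and $sexy(4)=30$, so $sexy(4)=2\cdot twin(4)$.

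For the inductive step, suppose $n\geq 5$ and $sexy(n-1)=2\cdot twin(n-1)$. Since $n>4$, Corollary \ref{cor:n-sexy} gives $sexy(n)=(p_n-2)\cdot sexy(n-1)$, and since $n-1\geq 3>2$, Corollary \ref{cor:n-twin} gives $twin(n)=(p_n-2)\cdot twin(n-1)$. Substituting the inductive hypothesis,
\[
sexy(n)=(p_n-2)\cdot sexy(n-1)=(p_n-2)\cdot 2\cdot twin(n-1)=2\cdot\bigl((p_n-2)\cdot twin(n-1)\bigr)=2\cdot twin(n),
\]
which closes the induction.

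There is no real obstacle here; the only point requiring a moment's care is bookkeeping the index ranges, namely that the $sexy$ recursion is only guaranteed for $n>4$ (hence the induction must start at $n=4$ and the inductive step only runs for $n\geq 5$), while the $twin$ recursion is available for all $n>2$ and so applies freely throughout the step. One could alternatively observe that both sequences satisfy the \emph{same} linear recursion $x(n)=(p_n-2)x(n-1)$ for $n\geq 5$ with proportional initial data at $n=4$, from which $sexy(n)/twin(n)$ is constant and equal to $2$ for all $n\geq 4$; this is the same argument phrased without explicit induction.
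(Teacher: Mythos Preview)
Your proof is correct and matches the paper's intended reasoning: the paper states this corollary without proof immediately after establishing the recursion $sexy(n)=(p_n-2)\,sexy(n-1)$ for $n>4$, so the intended argument is precisely the one you give---both $sexy$ and $twin$ obey the same recursion $(p_n-2)\cdot(\text{previous})$ once $n\geq 5$, and the ratio $sexy(4)/twin(4)=30/15=2$ then propagates. Your care with the index ranges is appropriate and nothing further is needed.
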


Table \ref{tab:sexy} shows the relationship between the count of $n$-totative sexy couples ($sexy(n)$) and the total number of sexy prime couples in the $n$-primorial set, i.e., up to $\#(n)+1$ ($sexy_{*}(\#(n)+1)$). Notice that the behavior of the ratio between the number of twin primes and the number of $n$-twins is the same as the behavior of the ratio between the number of cousin primes and the number of $n$-cousins, and the behavior of the ratio between the number of sexy primes and the number of $n$-sexy numbers. 

\begin{table}[htbp]
\centering
\begin{tabular}{|r|r|r|r|r|}
\hline
$n$ & $\#(n)$ & $sexy(n)$ & $sexy_{*}(\#(n)+1)$ & $\frac{sexy(n)}{sexy_{*}(\#(n)+1)}$\\
\hline
3 & 30 & 5 & 6 & 0.833333\\
\hline
4 & 210 & 30 & 26 & 1.15385\\
\hline
5 & 2310 & 270 & 140 & 1.92857\\
\hline
6 & 30030 & 2970 & 951 & 3.12303\\
\hline
7 & 510510 & 44550 & 9331 & 4.77441\\
\hline
8 & 9699690 & 757350 & 114189 & 6.63243\\
\hline
9 & 223092870 & 15904350 & 1792173 & 8.87434\\
\hline
10 & 6469693230 & 429417450 & 36921295 & 11.6306\\
\hline
\end{tabular}
\caption{\label{tab:sexy}Relation between the count of $n$-totative sexy couples ($sexy(n)$) and the total number of sexy prime couples in the $n$-primorial set, i.e., up to $\#(n)+1$ ($sexy_{*}(\#(n)+1)$).}
\end{table}

\section{Goldbach Conjecture}
There are a lot of different conjectures about prime numbers \cite{wikipediaprimeconjectures}. In particular, Goldbach's conjecture stays that any even natural number greater than six ($6$) can be expressed as the sum of two prime numbers \cite{Rassias,https://doi.org/10.1112/plms/s2-44.4.307}. In this section, we derive a version of Goldbach's conjecture but for primorial intervals and combine Goldbach's conjecture with twin, cousin, and sexy prime numbers. From now on, $\mathcal{P}$ denotes the set of prime numbers, $\mathcal{E}$ denotes the set of even natural numbers, $\mathcal{P}_{n}$ denotes the set of primes in the $n$-primorial interval, i.e., $\mathcal{P}_{n}=\mathcal{P}\bigcap \mathcal{I}_{n}$, and $\mathcal{E}_{n}$ denotes the set of even number in the $n$-primorial interval, i.e., $\mathcal{E}_{n}=\mathcal{E}\bigcap \mathcal{I}_{n}$.

\begin{conjecture}
\textbf{\label{conj:(Binary-Goldbach)}(Goldbach)} For any positive even number natural $m\geq4$ there exist at least two prime numbers $p,q\in P$ such that $m=p+q$. 
\end{conjecture}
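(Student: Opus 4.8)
Conjecture \ref{conj:(Binary-Goldbach)} is the classical binary Goldbach conjecture, which has resisted proof since 1742; what follows is therefore a description of the lines of attack rather than a complete argument. The plan is to pass to an analytic formulation: for even $m$ let $r(m)$ count the ordered pairs $(p,q)$ of primes with $p+q=m$, and aim to show $r(m)\geq 1$ for every even $m\geq 4$ (an unordered representation, allowing $p=q$ as in $4=2+2$ or $6=3+3$, already fulfils the statement). The main tool would be the Hardy--Littlewood circle method: writing $e(\theta)=e^{2\pi i\theta}$ and $S(\alpha)=\sum_{p\leq m}e(p\alpha)$, one has $r(m)=\int_0^1 S(\alpha)^2 e(-m\alpha)\,d\alpha$, and I would split $[0,1)$ into major arcs around rationals with small denominator and the complementary minor arcs, evaluate the major-arc integral to obtain the predicted main term $\mathcal{S}(m)\,m/(\log m)^2$ with $\mathcal{S}(m)$ the singular series, and bound the minor-arc integral.

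The steps, in order, would be: first, establish the major-arc asymptotic using the prime number theorem in arithmetic progressions with uniformity in the modulus (Siegel--Walfisz), which controls moduli up to a fixed power of $\log m$; second, show $\mathcal{S}(m)$ is bounded below by an absolute positive constant for even $m$, which is elementary because $\mathcal{S}(m)$ is a fixed positive constant times $\prod_{p\mid m,\ p>2}\frac{p-1}{p-2}\geq 1$; third, bound the minor-arc contribution by $o\!\left(m/(\log m)^2\right)$ via a nontrivial estimate for $S(\alpha)$ when $\alpha$ is poorly approximable by rationals. As fallbacks one can target weaker but provable statements and then try to upgrade them: the almost-all result (the exceptional even numbers have density zero), Chen's theorem (every large even $m$ is $p+P_2$ with $P_2$ a product of at most two primes), and Schnirelmann-type bounded-summand results. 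One could also attempt a sieve lower bound for $r(m)$ inside each primorial interval $\mathcal{I}_n$, exploiting that every prime there is either some $p_i$ or an $n$-totative and reusing the counting machinery of the preceding section to track how $p_n$ removes residue classes.

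The main obstacle is exactly the binary minor-arc estimate. The square $S(\alpha)^2$ does not dominate the minor arcs the way the cube $S(\alpha)^3$ does in Vinogradov's three-prime theorem: the best pointwise cancellation available in $S(\alpha)$ saves only about $m^{1/2}$ (even conditionally), whereas on a set of measure close to $1$ one needs to save essentially the whole of $S(\alpha)$. Closing this gap is precisely what no method has achieved, so I would not expect to resolve it here; the realistic deliverable is the conditional and almost-all statements together with the Chen-type sieve bound, presented honestly as partial progress. Accordingly the statement is recorded as Conjecture \ref{conj:(Binary-Goldbach)} and will be used below as a hypothesis from which the primorial-interval, twin, cousin, and sexy variants of Goldbach's conjecture are derived.
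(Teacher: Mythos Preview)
The paper does not prove this statement; it is recorded as a \emph{conjecture} and left unproved, exactly as you recognise. Your honest acknowledgement that the binary Goldbach problem is open, together with your survey of the circle method, Chen's theorem, and almost-all results, is mathematically accurate and well beyond anything the paper attempts---the paper simply states the conjecture and moves on. In that sense your proposal and the paper agree on the essential point: no proof is offered.

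One small misreading worth correcting: you write that Conjecture~\ref{conj:(Binary-Goldbach)} ``will be used below as a hypothesis from which the primorial-interval, twin, cousin, and sexy variants of Goldbach's conjecture are derived.'' The paper does not do this. The Goldbach--Intervals, Goldbach--Twin, Goldbach--Cousin, and Goldbach--Sexy statements are each posed as \emph{independent} conjectures, not as consequences of the classical Goldbach conjecture. In fact the only logical implication the paper records runs in the \emph{opposite} direction: the theorem immediately following the Goldbach--Intervals conjecture asserts that Goldbach--Intervals implies the classical Goldbach conjecture, not the reverse. So the classical conjecture is neither assumed nor used as a hypothesis anywhere in the paper.
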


\begin{conjecture}
\label{conj:intervals-Goldbach}\textbf{(Goldbach-Intervals)}
For any positive natural number $n$ and any $m\in \mathcal{E}_{n+1}$ there are at least two prime numbers $p\in\mathcal{P}_{n}$ and $q\in\mathcal{P}_{n}\bigcup\mathcal{P}_{n+1}$ such that $m=p+q$.
\end{conjecture}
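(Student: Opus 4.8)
The plan is to reduce Conjecture~\ref{conj:intervals-Goldbach} to the classical binary statement in Conjecture~\ref{conj:(Binary-Goldbach)} by means of a counting argument, and then to settle the finitely many small cases by direct computation. First I would recast the target in terms of a single summand. Fix $n$ and $m\in\mathcal{E}_{n+1}$, so $m$ is even and $\#(n)+1\le m\le\#(n+1)+1$. If $p$ is a prime with $\#(n-1)+1\le p\le\#(n)+1$, then $q=m-p$ satisfies $q<\#(n+1)+1$ automatically (since $m-p\le\#(n+1)+1-(\#(n-1)+1)$), while $q\ge\#(n-1)+1$ holds precisely when $p\le m-\#(n-1)-1$. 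Hence it is enough to find a prime
\[
p\in W_m:=\bigl[\ \#(n-1)+1,\ \min\bigl(\#(n)+1,\ m-\#(n-1)-1\bigr)\ \bigr]
\]
for which $m-p$ is also prime; then $p\in\mathcal{P}_n$ and $q=m-p\in\mathcal{P}_n\cup\mathcal{P}_{n+1}$, as required. The primorial recursion $\#(n)=p_n\#(n-1)$ shows that $W_m$ is an honest subinterval of $\mathcal{I}_n$ of length at least $\#(n-1)(p_n-2)$, hence non-degenerate for $n\ge 2$.

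Second, I would try to extract such a representation from Goldbach. Conjecture~\ref{conj:(Binary-Goldbach)} supplies \emph{some} decomposition $m=p'+q'$, but its smaller summand may lie below $\#(n-1)+1$, in which case the decomposition is useless here, and nothing in the classical statement bounds how many such decompositions occur. To force the smaller summand into $W_m$ one needs to know that $m$ has strictly more Goldbach representations than there are primes outside $W_m$; the natural input is a Hardy--Littlewood-type lower bound for the number of Goldbach representations of $m$, under which the number of admissible $p\in W_m$ with $m-p$ prime is $\gg \#(n-1)/(\log m)^2$. This tends to infinity with $n$, so it exceeds $1$ for every $n$ past some explicit $n_0$; the remaining cases $n\le n_0$ (together with $n=1$, where $W_m$ can be a single point) would be checked exhaustively with the program \cite{GomezPrimesGit}, exactly as the tables of the previous section were produced.

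The main obstacle is precisely the passage from ``$m$ has a Goldbach representation'' to ``$m$ has a Goldbach representation whose smaller summand lies in a prescribed interval of length $\asymp\#(n-1)$''. This is a Goldbach-in-intervals assertion that does not follow from Conjecture~\ref{conj:(Binary-Goldbach)} alone, so an unconditional proof is out of reach by present methods; the realistic outcome is a theorem conditional on a quantitative Goldbach hypothesis together with the base-case check, or, equivalently, a clean reduction of Conjecture~\ref{conj:intervals-Goldbach} to a suitable restricted-summand form of Goldbach's conjecture.
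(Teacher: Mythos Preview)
The statement you are trying to prove is labeled a \emph{conjecture} in the paper, and the paper makes no attempt to prove it. All that the paper supplies is (i) the explicit verification in Table~\ref{tab:First-two-Interval} for $n=1,2$, (ii) a remark that the cases $n\le 10$ have been checked with the program in \cite{GomezPrimesGit}, and (iii) the easy implication in the \emph{opposite} direction, namely that Conjecture~\ref{conj:intervals-Goldbach} implies Conjecture~\ref{conj:(Binary-Goldbach)}. There is therefore no proof in the paper to compare your proposal against.

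As for the proposal itself, you are candid that it does not yield an unconditional proof, and that assessment is correct. The reduction you outline---forcing the smaller Goldbach summand of $m$ into the window $W_m\subset\mathcal{I}_n$ of length at least $\#(n-1)(p_n-2)$---is a restricted-summand Goldbach statement that is strictly stronger than Conjecture~\ref{conj:(Binary-Goldbach)}, so invoking Conjecture~\ref{conj:(Binary-Goldbach)} cannot close the argument. What your plan actually delivers is a reduction of one open problem to another (a quantitative Hardy--Littlewood-type lower bound on Goldbach representations), together with a finite computation; that is consistent with the paper's own stance, which leaves the statement as a conjecture and lists a proof among items of future work. If you present this, frame it as a conditional result or a reduction, not as a proof.

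One small correction: since $\#(n)$ is even for $n\ge 1$, the least element of $\mathcal{E}_{n+1}$ is $\#(n)+2$, not $\#(n)+1$; this does not affect your length estimate for $W_m$.
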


The pair of prime numbers satisfying the Goldbach-Intervals conjecture for the first two primorial intervals are shown in Table \ref{tab:First-two-Interval}. Goldbach-Intervals conjecture for the first $10$-primorial intervals can be validated with the C++ program (\textit{onprimorials.cpp}) freely available at the prime numbers github repository of professor Jonatan Gómez \cite{GomezPrimesGit}.

\begin{table}
\begin{centering}
\begin{tabular}{|c|c|c|c|c|c|}
\hline 
$n$ & $\mathcal{P}_{n}$ & $\mathcal{P}_{n}\bigcup\mathcal{P}_{n+1}$ & $m\in E_{n+1}$ & $p$ & $q$\tabularnewline
\hline 
\hline 
\multirow{2}{*}{$1$} & \multirow{2}{*}{$\left\{ 2,3\right\} $} & \multirow{2}{*}{$\left\{ 2,3,5,7\right\} $} & $4$ & $2$ & $2$\tabularnewline
\cline{4-6} 
 &  &  & $6$ & $3$ & $3$\tabularnewline
\hline 
\multirow{12}{*}{$2$} & \multirow{12}{*}{$\left\{ 3,5,7\right\} $} & \multirow{12}{*}{$\left\{ 3,5,7,11,13,17,19,23,29,31\right\} $} & $8$ & $3$ & $5$\tabularnewline
\cline{4-6} 
 &  &  & $10$ & $3$ & $7$\tabularnewline
\cline{4-6} 
 &  &  & $12$ & $5$ & $7$\tabularnewline
\cline{4-6} 
 &  &  & $14$ & $3$ & $11$\tabularnewline
\cline{4-6} 
 &  &  & $16$ & $3$ & $13$\tabularnewline
\cline{4-6} 
 &  &  & $18$ & $5$ & $13$\tabularnewline
\cline{4-6} 
 &  &  & $20$ & $3$ & $17$\tabularnewline
\cline{4-6} 
 &  &  & $22$ & $3$ & $19$\tabularnewline
\cline{4-6} 
 &  &  & $24$ & $5$ & $19$\tabularnewline
\cline{4-6} 
 &  &  & $26$ & $3$ & $23$\tabularnewline
\cline{4-6} 
 &  &  & $28$ & $5$ & $23$\tabularnewline
\cline{4-6} 
 &  &  & $30$ & $7$ & $23$\tabularnewline
\hline 
\end{tabular}
\par\end{centering}
\caption{\label{tab:First-two-Interval}Validation of the Goldbach-Intervals conjecture for the first two primorial intervals.}

\end{table}
\begin{thm}
If the Goldbach-Intervals conjecture \ref{conj:intervals-Goldbach} is true then the Goldbach conjecture \ref{conj:(Binary-Goldbach)} is true.
\end{thm}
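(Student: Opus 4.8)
The plan is to derive the Goldbach conjecture by a direct covering argument: every even $m\geq 4$ belongs to $\mathcal{E}_{n+1}$ for a suitable $n\geq 1$, and the Goldbach-Intervals conjecture applied to that pair $(n,m)$ already delivers the required prime decomposition, since $\mathcal{P}_{n}$ and $\mathcal{P}_{n+1}$ are sets of prime numbers.

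First I would record that the primorials are strictly increasing and unbounded: for $n\geq 1$ we have $\#(n)=p_n\#(n-1)\geq 2\#(n-1)$, hence $\#(n)\geq 2^{n}$, and $\#(1)+1=3$. Now fix an even number $m\geq 4$. The set of integers $n\geq 1$ with $\#(n)+1\leq m$ is nonempty (it contains $n=1$, as $3\leq 4\leq m$) and finite, so it has a largest element $n$. By maximality $\#(n+1)+1>m$, i.e. $m\leq \#(n+1)+1$; combined with $\#(n)+1\leq m$ this is exactly the statement $m\in\mathcal{I}_{n+1}$, using $\mathcal{I}_{n+1}=\{x\mid \#(n)+1\leq x\leq \#(n+1)+1\}$. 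Since $m$ is even, $m\in\mathcal{E}_{n+1}$. Conceptually, what is going on is that the primorial intervals $\mathcal{I}_2,\mathcal{I}_3,\dots$ overlap at consecutive endpoints and are cofinal in $\mathbb{N}$, so they cover $\{4,5,6,\dots\}$; the ``maximal $n$'' device is merely a clean way to name the interval containing $m$.

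Next, assuming the Goldbach-Intervals conjecture \ref{conj:intervals-Goldbach}, apply it to this $n$ and this $m\in\mathcal{E}_{n+1}$: there exist $p\in\mathcal{P}_{n}$ and $q\in\mathcal{P}_{n}\bigcup\mathcal{P}_{n+1}$ with $m=p+q$. By definition $\mathcal{P}_{j}=\mathcal{P}\cap\mathcal{I}_{j}\subseteq\mathcal{P}$, so both $p$ and $q$ are prime, which is precisely the conclusion of the Goldbach conjecture \ref{conj:(Binary-Goldbach)} for $m$ (the two conjectures are phrased identically here, so no strengthening is needed). As $m$ was an arbitrary even number $\geq 4$, the Goldbach conjecture follows.

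I do not expect a real obstacle: all the mathematical content sits inside the Goldbach-Intervals conjecture, and the rest is the bookkeeping of placing $m$ in the correct primorial interval. The only points deserving a line of care are the small cases (for $m=4$ and $m=6$ the chosen index is $n=1$, and indeed $\mathcal{I}_2=\{3,4,5,6,7\}$ contains both) and checking that the interval endpoints are absorbed by the non-strict inequalities defining $\mathcal{I}_{n+1}$; neither is subtle.
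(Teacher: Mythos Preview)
Your argument is correct: the covering observation that every even $m\geq 4$ lies in some $\mathcal{E}_{n+1}$ (via the largest $n$ with $\#(n)+1\leq m$), followed by a direct application of the Goldbach--Intervals conjecture, is exactly the right way to do this, and the small checks you flag (the base case $n=1$ for $m\in\{4,6\}$, and the non-strict endpoint inequalities) are handled cleanly.

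As for comparison with the paper: the paper states this theorem without proof---there is no \texttt{proof} environment following it, and the text moves immediately to the Goldbach--Twin conjecture. The implication is evidently regarded as self-evident, and your argument is precisely the one-paragraph justification the paper omits. So there is no alternative approach to contrast with; you have simply supplied what the paper left implicit.
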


\begin{conjecture}
\label{conj:twin-Goldbach}\textbf{(Goldbach-Twin)}
For any positive even natural number $n$ there are at least two prime numbers $p\in\mathcal{P}$ and $q\in\mathcal{P}$ such that $m=p+q$ and $p$ or $q$ is a twin prime.    
\end{conjecture}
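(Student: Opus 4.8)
The plan is to reduce the Goldbach-Twin conjecture to a single positivity statement about a restricted representation count and then attack that count by the circle-method and sieve heuristics that underlie the ordinary Goldbach problem. For an even $m$, let $r(m)$ denote the number of ordered pairs $(p,q)$ of primes with $m=p+q$ and $p$ a twin prime, i.e. $p-2$ or $p+2$ is also prime. The conjecture is exactly the assertion that $r(m)\geq 1$ for every even $m$ beyond a small explicit threshold, so the whole problem becomes showing that $r(m)$ never vanishes. First I would split the tail from the head: verify $r(m)\geq 1$ for all even $m$ up to $\#(n)+1$ for the first several $n$ using the program \textit{onprimorials.cpp}, which already enumerates twin totatives interval by interval, and then argue that $r(m)>0$ for all large $m$.

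Second, for the large-$m$ regime I would write $r(m)$ as a sum over twin primes $p$ of the indicator that $m-p$ is prime and estimate it by the Hardy--Littlewood circle method. Since requiring $p+2$ prime is a special case of the twin condition, one has $r(m)\geq r^{+}(m)$, where $r^{+}(m)$ counts $p$ with $p$, $p+2$, and $m-p$ all prime; the expected main term is $r^{+}(m)\sim \mathfrak{S}^{+}(m)\, m/(\log m)^{3}$, with $\mathfrak{S}^{+}(m)$ the singular series built from the local densities of the three linear forms $p$, $p+2$, and $m-p$. The key step is to show that $\mathfrak{S}^{+}(m)$ is bounded below by a positive constant for every even $m$, which amounts to checking that the three forms are simultaneously admissible modulo every prime $\ell$ — concretely, that the three residue classes never cover all of $\mathbb{Z}_{\ell}$. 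This is precisely the admissibility condition the paper encodes through $n$-strong $n$-admissible tuples, so I would phrase it in that language and confirm that the twin shift $(0,2)$ together with the Goldbach shift $m-p$ never forces a full covering for any even $m$.

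The hard part — and the point at which an unconditional proof breaks down — is turning the positive heuristic main term into a rigorous lower bound $r(m)\geq 1$. Detecting a genuine twin prime $p$ (rather than an almost-prime $p$ with $p+2$ having at most two factors, which is all Chen-type weighted sieves currently deliver) inside a binary additive problem is strictly stronger than the twin prime conjecture itself, so no existing sieve or analytic input yields the required lower bound; the parity problem obstructs any purely sieve-theoretic count of representations with both summands genuinely prime and one of them part of a twin pair. Consequently the realistic deliverable is a \emph{conditional} theorem: assuming the Hardy--Littlewood $k$-tuple asymptotics, or, in the paper's framework, assuming that the twin-totative density persists into genuine primes as the growing ratios in Table \ref{tab:twin} suggest, the singular-series positivity above gives $r(m)\to\infty$, and the finite check closes the small cases. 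I would therefore present Conjecture \ref{conj:twin-Goldbach} in exactly this conditional form, flagging the unconditional statement as open for the same reason the twin prime conjecture is open.
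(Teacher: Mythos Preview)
The paper does not prove this statement at all: it is stated as a \emph{conjecture} (note the \verb|\begin{conjecture}| environment), and the only supporting evidence the paper offers is computational verification for even numbers up to $\#(10)$ via the program \texttt{onprimorials.cpp}. There is therefore no ``paper's proof'' against which to compare your proposal.

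Your own analysis is internally coherent and correctly identifies the essential obstruction: Goldbach--Twin implies the infinitude of twin primes. Indeed, if the set of twin primes were finite with maximum $T$, then taking any prime gap $(P,P')$ of length exceeding $T$ yields an even $m$ with $P+T<m<P'+3$ for which every $m-t$ (with $t$ a twin prime) lies strictly inside the gap and is composite, contradicting the conjecture. Hence an unconditional proof is at least as hard as the twin prime conjecture, and your conclusion that only a conditional result (under Hardy--Littlewood-type hypotheses) is presently realistic is exactly right. Your circle-method/singular-series sketch goes well beyond anything the paper attempts; the paper simply records the conjecture and the numerical check, and in its Conclusions explicitly lists ``find proof of the stated conjectures'' as future work.
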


\begin{conjecture}
\label{conj:cousin-Goldbach}\textbf{(Goldbach-Cousin)}
For any positive even natural number $n$ there are at least two prime numbers $p\in\mathcal{P}$ and $q\in\mathcal{P}$ such that $m=p+q$ and $p$ or $q$ is a cousin prime.    
\end{conjecture}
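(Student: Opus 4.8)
The plan is to mirror the interval-based strategy used for the Goldbach--Intervals conjecture \ref{conj:intervals-Goldbach}, but now tracking the extra structure that distinguishes cousin primes, so that the cousin condition is enforced from the outset rather than recovered afterwards. Recall that a \emph{cousin prime} is a prime belonging to a pair $(p,p+4)$, and that Corollary \ref{cor:n-cousin} shows the number of cousin couples inside the $n$-primorial interval grows like $cousin(n)=(p_n-2)\,cousin(n-1)$; cousin primes are therefore far from scarce within each interval. First I would fix an even $m$ lying in some interval $\mathcal{I}_{n+1}$ and write down the full set of Goldbach representations $m=p+q$ with $p,q\in\mathcal{P}$. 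Granting Goldbach--Intervals this set is nonempty, and its cardinality grows with $m$. The target is then purely combinatorial: to guarantee that at least one such representation places a cousin prime among its two summands.

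Next I would partition the Goldbach representations of $m$ into those that already use a cousin prime and those that are \emph{cousin-free} (neither $p$ nor $q$ is a member of a cousin pair), and aim for the inequality that the cousin-free count is strictly smaller than the total. To estimate these quantities the natural rigorous tool is the Hardy--Littlewood circle method applied to the weighted count $R(m)=\#\{(p,q): m=p+q,\ p\ \text{or}\ p+4\ \text{prime}\}$. Its singular series factors over primes exactly as the cousin and totative constraints prescribe, and because $(0,4)$ is admissible (the strong $4$-admissibility recorded in Corollary \ref{cor:n-cousin} rules out any local obstruction) one expects the singular series to be strictly positive, producing a main term of order $m/(\log m)^3$. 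A positive main term would force $R(m)>0$ for all large $m$, after which the finitely many small even numbers are dispatched by the direct verification in the accompanying program \cite{GomezPrimesGit}.

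The hard part, and the reason this statement is posed as a conjecture rather than proved as a theorem, is that Goldbach--Cousin is strictly stronger than two famous open problems at once: dropping the cousin requirement gives the ordinary Goldbach conjecture \ref{conj:(Binary-Goldbach)}, while letting $m$ range forces the cousin summand to take arbitrarily large values (a fixed cousin prime $p_0$ could serve only a density-zero set of $m$), hence yields infinitely many cousin primes, the exact analogue of the twin prime conjecture. No unconditional lower bound on $R(m)$ is attainable with present technology: the circle method supplies the singular series only heuristically, without the required error control, and sieve theorems of Chen type deliver a summand that is prime-plus-almost-prime rather than a genuine cousin prime. The realistic deliverable is therefore a \emph{conditional} theorem --- assuming the Hardy--Littlewood $k$-tuple asymptotics, or merely a positive-density lower bound on cousin primes near $m$ together with Goldbach, the splitting argument of the second paragraph closes the gap. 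Removing that hypothesis to obtain an unconditional proof is precisely the obstacle I do not expect to surmount here.
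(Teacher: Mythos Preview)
The paper does not prove this statement at all: it is stated as a conjecture, supported only by computational verification up to $\#(10)$ via the program in \cite{GomezPrimesGit}, and in the conclusion the author explicitly lists finding a proof as future work. So there is no ``paper's own proof'' to compare against.

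Your proposal correctly recognizes this and, in fact, goes well beyond the paper in diagnosing \emph{why} the statement is out of reach: you observe that Goldbach--Cousin simultaneously implies the ordinary Goldbach conjecture and the infinitude of cousin primes, each already open, and you sketch the circle-method heuristic that would yield a conditional result under Hardy--Littlewood-type asymptotics. None of that analysis appears in the paper. What you have written is an honest and informative proof \emph{plan} that ends at the expected obstruction; it is not a proof, but neither is anything in the paper, and your discussion is the more substantive of the two.
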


\begin{conjecture}
\label{conj:intervals-Goldbach}\textbf{(Goldbach-Sexy)}
For any positive even natural number $n$ there are at least two prime numbers $p\in\mathcal{P}$ and $q\in\mathcal{P}$ such that $m=p+q$ and $p$ or $q$ is a sexy prime.    
\end{conjecture}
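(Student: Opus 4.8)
The plan is to prove the statement in two regimes. First I would dispose of all small even $m$ by direct computation, exhibiting for each even $m$ below a fixed bound $M_0$ an explicit decomposition $m = p + q$ in which one summand is a sexy prime; the C++ program already referenced in the paper certifies such a range. The substantive part is then the tail $m > M_0$, where I would show that the number of Goldbach decompositions carrying a sexy summand is not merely positive but grows, so that one such decomposition must exist.

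The key reformulation is that it suffices to make a single summand, say $p$, sexy by forcing $p+6$ to be prime. Thus I would count the integers $p$ with $3 < p < m$ for which the three values
\[
    p, \qquad p+6, \qquad m-p
\]
are simultaneously prime. Any such $p$ yields a representation $m = p + (m-p)$ in which $p$ is a sexy prime, so a positive count closes the case. This is a prime $k$-tuple count in the single variable $p$ for the linear forms $L_1(p)=p$, $L_2(p)=p+6$, $L_3(p)=m-p$, and the Hardy--Littlewood heuristic used throughout this paper predicts
\[
    \#\{\,p : L_1, L_2, L_3 \text{ all prime}\,\} \;\sim\; \mathfrak{S}(m)\,\frac{m}{(\log m)^{3}},
\]
with $\mathfrak{S}(m)$ the associated singular series. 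The right-hand side tends to infinity with $m$, so positivity for large $m$ follows once $\mathfrak{S}(m)>0$.

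Next I would verify that $\mathfrak{S}(m)>0$ for every even $m$, i.e. that the three forms carry no local obstruction. Modulo $2$ all three values are odd, since $m$ is even and $p$ is taken odd; modulo $3$ one has $p+6 \equiv p$, so the binding constraints reduce to $p\not\equiv 0$ and $p\not\equiv m$, which always leaves an allowed class; modulo $5$ the forbidden residues are $\{0,4,m\bmod 5\}$, never all five; and for every prime $\ell \geq 5$ three forms forbid at most three residues, hence never exhaust the $\ell$ classes. Thus no prime completely blocks the configuration and the singular series stays positive, uniformly over the finitely many residues of $m$ modulo the small primes. This is the step I would treat most carefully, tracking the dependence on $m \bmod 6$ because the shift $6$ interacts with the primes $2$ and $3$.

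The decisive obstacle is that the displayed asymptotic is exactly an instance of the prime $k$-tuple conjecture and is not known unconditionally; the argument therefore establishes the statement only conditionally on a sufficiently strong quantitative form of the admissible $3$-tuple heuristic employed in Section~\ref{sec:tot}. Unconditionally, the strongest available tools---Chen-type sieves together with the Bombieri--Vinogradov level of distribution---yield only an ``almost prime'' surrogate, such as $m=p+q$ with $p$ sexy and $q$ a product of at most two primes; removing the almost-prime relaxation would require a level of distribution for primes in arithmetic progressions beyond $1/2$, which is precisely where the method stalls. One could instead phrase the same count inside the primorial-table framework, sieving the $n$-totative columns against the forms $p+6$ and $m-p$, but this merely repackages the identical singular-series constant as a product over primorial factors and confronts the same distribution barrier.
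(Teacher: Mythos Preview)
The paper does not prove this statement: it is explicitly labeled a \emph{Conjecture}, and the surrounding text offers only numerical verification up to $\#(10)$ via the referenced C++ program, together with the remark in the Conclusions that proving the stated conjectures is left to future work. There is therefore no ``paper's own proof'' against which to measure your argument.

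Your proposal is, in fact, an honest account of why no proof exists. The reduction to counting $p$ with $p$, $p+6$, $m-p$ simultaneously prime is a correct reformulation, and your local analysis showing $\mathfrak{S}(m)>0$ for even $m$ is fine. But the asymptotic you invoke is a three-form instance of the Hardy--Littlewood prime $k$-tuple conjecture, and you yourself flag that this is not a theorem. Since the Goldbach--Sexy statement already implies the ordinary binary Goldbach conjecture (any decomposition with a sexy summand is in particular a Goldbach decomposition), an unconditional proof here would resolve Goldbach outright; no sieve at level $1/2$, and no repackaging through primorial tables, can circumvent that. So what you have written is a conditional heuristic plus a correct diagnosis of the obstruction, not a proof---which is exactly the status the paper assigns to the statement.
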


Goldbach-Twin, Goldbach-Cousin, and Goldbach-Sexy conjectures for the first $10$-primorial sets can be validated with the C++ program (\textit{onprimorials.cpp}) freely available at the prime numbers github repository of professor Jonatan Gómez \cite{GomezPrimesGit}.

\section{Conclusions and Future Work}
We have shown that primorial numbers may be useful for understanding some properties of prime numbers and prime numbers classes. For instance, we defined the concept of $n$-primorial set and established a relation between prime numbers in the interval $[p_n,\#(n)+1]$ and $n$-totative numbers. We used primorial intervals and primorial tables to define functions that count $n$-admissible $k$-tuples and to establish a relationship between them and their prime admissible $k$-tuples counterparts. In particular, we showed that the behavior of the ratio between the number of twin primes and the number of $n$-twins is the same as the behavior of the ratio between the number of cousin primes and the number of $n$-cousins, and the behavior of the ratio between the number of sexy primes and the number of $n$-sexy numbers. Finally, we stated variation on Goldbach's conjecture one in terms of primorial intervals and three in terms of twin, cousin, and sexy prime numbers. We computationally validate such conjectures for even numbers up to the 10th primorial number,i.e., up to $\#(10)=6469693230$, see C++ program \textit{onprimorial.cpp} at professor Jonatan Gomez github repository \cite{GomezPrimesGit}. Our future work will concentrate on expressing the Prime Number Theorem \cite{narkiewicz2000} in terms of both primorial numbers and $n$-totative numbers. Also, we will study the relationship between the asymptotic behavior of $n$-totative admissible $k$-tuples as $n$-twin, $n$-cousin, and $n$-sexy couples and their corresponding twin, cousin, and sexy prime counterparts. Finally, we will try to find proof of the stated conjectures.
\printbibliography

\end{document}